\newtheorem{theorem}{Theorem}[section]
\newtheorem*{theorem*}{Theorem}
\newtheorem{proposition}[theorem]{Proposition}
\newtheorem{corollary}[theorem]{Corollary}
\newtheorem{lemma}[theorem]{Lemma}
\theoremstyle{definition}
\theoremstyle{remark}
\newtheorem{remark}[theorem]{Remark}
\numberwithin{equation}{section}
\begin{document}

\title{Algebras of frequently hypercyclic vectors}

\thanks{The authors were supported by the Fonds de la Recherche Scientifique - FNRS, grant no. PDR T.0164.16. The first author was also supported by MINECO and FEDER Project MTM2017-83262-C2-1-P}

\subjclass[2010]{Primary 47A16}

\keywords{Frequently hypercyclic vector, $\mathcal{A}$-hypercyclic vector, weighted shift, Banach algebra, algebrability}

\author[J. Falc\'{o}]{Javier Falc\'{o}}
\address[Javier Falc\'{o}]{Departamento de An\'alisis Matem\'atico, Universidad de Valencia, Doctor Moliner 50, 46100 Burjasot (Valencia),
Spain.}
\email{Francisco.J.Falco@uv.es}
\author[K.-G. Grosse-Erdmann]{Karl-G. Grosse-Erdmann}
\address[Karl-G. Grosse-Erdmann]{D\'epartement de Math\'ematique, Institut Complexys,
Universit\'e de Mons, 20 Place du Parc, 7000 Mons, Belgium} \email{kg.grosse-erdmann@umons.ac.be}
%


\begin{abstract}
We show that the multiples of the backward shift operator on the spaces $\ell_{p}$, $1\leq p<\infty$, or $c_{0}$, when endowed with coordinatewise multiplication, do not possess frequently hypercyclic algebras. More generally, we characterize the existence of algebras of $\mathcal{A}$-hypercyclic vectors for these operators. We also show that the differentiation operator on the space of entire functions, when endowed with the Hadamard product, does not possess frequently hypercyclic algebras. On the other hand, we show that for any frequently hypercyclic operator $T$ on any Banach space, $FHC(T)$ is algebrable for a suitable product, and in some cases it is even strongly algebrable. 
\end{abstract}

\maketitle

One of the areas of research in linear dynamics is the study of the structure of the set of hypercyclic vectors 
$$
HC(T)=\{x\in X: \{x,Tx,T^2x,\ldots\} \text{ is dense in } X\}
$$
of a linear dynamical system $(X,T)$.

It is well known that, as soon as an operator is hypercyclic, its set of hypercyclic vectors has a rich structure. It always contains a dense subspace, and it sometimes even contains a closed infinite-dimensional subspace.  

When  we also have a multiplicative structure on the space $X$, it is natural to ask whether we can carry this structure to some subset of $HC(T)$. This problem was first studied in \cite{Aron07} for two classical operators on the space $H(\mathbb{C})$ of entire functions: the Birkhoff translation operators $T_a: f\to f(\cdot +a)$, $a\neq 0$, and the MacLane differentiation operator $D: f\to f'$. Aron et al.\ showed that no non-trivial power of any entire function is hypercyclic for a translation operator; however, for the differentiation operator, many hypercyclic vectors satisfy that all of its powers are hypercyclic. In fact, these elements form a dense $G_\delta$-subset of $H(\mathbb{C})$.

The  natural scenario for considering the existence of multiplicative structures are Banach or Fr\'echet algebras. In this paper we will focus mainly on Banach algebras. Recall that a Banach algebra is an (associative) algebra $X$ over the real or complex numbers that at the same time is also a Banach space where the norm and the multiplicative structure are related by the following inequality:
\begin{equation*}
\|x\,y\| \leq \|x\|\,\|y\|,\quad x,y\in X.
\end{equation*}
All the algebras that we are considering here are algebras over the field of the complex numbers. 

When we have a hypercyclic operator $T$ on a Banach algebra $X$ it is natural to ask if $HC(T)$ contains a non-trivial subalgebra of $X$, except zero. When such a subalgebra exists it is called a \textit{hypercyclic algebra} for $T$.

Motivated by the result  for the MacLane differentiation operator obtained in \cite{Aron07}, Shkarin \cite{Sh10} and Bayart and Matheron \cite[Theorem 8.26]{EtMAt} showed independently that it admits a hypercyclic algebra, thereby providing the first known example of an operator with a hypercyclic algebra. Recently, B\`es, Conejero and Papathanasiou \cite{BesConPap,BesConPap2,BesPap} and Bayart \cite{Bay} extended the existence of hypercyclic algebras to various other operators like convolution operators and certain translation operators, and the authors \cite{falgro} proved  the existence of algebras of hypercyclic vectors for weighted backward shifts on Fr\'echet sequence spaces that are algebras when endowed with coordinatewise multiplication or with the Cauchy product. 

Here we are interested in the study of the more restrictive case of frequent hypercyclicity; in continuation of our approach in \cite{falgro}, our methods in this paper will be constructive. A (continuous, linear) operator $T$ is called \textit{frequently hypercyclic} if there is some $x \in X$ so that, for any non-empty open subset $U$ of $X$,
$$
 \underline{\text{dens}}\{n\in\mathbb N_{0} : T^{n}x\in U\}>0,
$$ 
where the  lower density of a subset $A \subset \mathbb N_{0}$ is defined as 
$$
\underline{\text{dens}}(A)=\liminf_{N\to \infty} \frac{\text{card}\{0\leq n\leq N : n\in A\}}{N+1}.
$$
In this case, $x$ is called a \textit{frequently hypercyclic vector} for $T$. The set of
frequently hypercyclic vectors for $T$ is denoted by $FHC(T)$. \textit{Upper frequent hypercyclicity} is defined in an analogous way.
We refer to the monographs \cite{EtMAt} and \cite{GrPe11} for these and other notions of linear dynamics.

We say that a frequently hypercyclic operator $T$ on a Banach algebra $X$ admits a \textit{frequently hypercyclic algebra} if  $FHC(T)$ contains a non-trivial subalgebra of $X$, except zero. If a frequently hypercyclic algebra is not finitely generated then $FHC(T)$ is said to be algebrable. In this paper, when we say that an algebra is infinitely generated, we mean that it cannot be finitely generated. Finally, $FHC(T)$ is said to be \textit{strongly algebrable} if it contains an infinitely generated algebra, except zero, that is isomorphic to a free algebra. For a commutative Banach algebra, this is equivalent to saying that $FHC(T)$ contains an algebra, except zero, that is generated by an algebraically independent sequence $(x_n)_n$. The notion of strong algebrability was introduced by Bartoszewicz and G{\l}a\c{b} in \cite{ArtGlab}. For more details on these topics we refer to the monograph \cite{ABPS16}.

The starting point of this paper is the study of the Rolewicz operator $\lambda B$, for $\lambda\in\mathbb C$ with $\vert \lambda\vert >1$, and its set of frequently hypercyclic vectors. Recall that for the Banach space $X=\ell_{p}$, $1\leq p<\infty$, or $X=c_{0}$ and any $\lambda \in \mathbb C$ the operator $\lambda B$ is defined by 
$$
\lambda B(x(1), x(2), x(3), \ldots) = (\lambda x(2), \lambda x(3), \lambda x(4), \ldots),\quad x\in X,
$$
and it is frequently hypercyclic if $\vert \lambda \vert >1$. The fact that these operators have dense orbits was first proved by Rolewicz \cite{Rol}.

In Section \ref{s-rol} we show that, under the coordinatewise multiplicative structure, the Banach algebras $\ell_{p}$ and $c_0$ do not have a frequently hypercyclic algebra for the Rolewicz operator. Motivated by this we ask in the remainder of the paper if positive results exist in some weaker or related contexts.

We begin by demanding less than frequent hypercyclicity. In Section \ref{s-ahyp} we will recall the notion of an $\mathcal A$-hypercyclic vector. We then characterize the Furstenberg families $\mathcal{A}$ for which the Rolewicz operator on the spaces $\ell_{p}$ or $c_{0}$ admits an algebra of $\mathcal A$-hypercyclic vectors, except 0. In particular, this is true when one replaces lower density (as used in frequent hypercyclicity) by lower logarithmic density. This implies that the Rolewicz operator has an upper frequently hypercyclic algebra, which was also shown, in a different context and with different methods, by Bayart, Costa J\'unior and Papathanasiou \cite{BaCoPa}.

In Section \ref{s-macl} we wonder if there exist frequently hypercyclic algebras for other weighted shifts. We only get further negative results, first for some other natural weights in the case of $\ell_{p}$, and then for the MacLane operator on $H(\mathbb{C})$ when endowed with the Hadamard product.

In Section \ref{s-alloperators} it is shown that every Banach space admits a product under which it becomes a Banach algebra and for which every frequently hypercyclic operator $T$ satisfies that $FHC(T)$ is algebrable. A stronger result holds under the assumption that $T$ has a frequently hypercyclic subspace; in this case, $FHC(T)$ is even strongly algebrable under a suitable product.

\section{The Rolewicz operator and the coordinatewise multiplicative structure}\label{s-rol}

The first natural multiplicative structure that we can consider on the Banach spaces $\ell_{p}$, $1\leq p < \infty$, and $c_0$ is coordinatewise multiplication. It is well known that under this product the spaces turn into Banach algebras. We show in this section that the set $FHC(\lambda B)$ does not contain any algebra of frequently hypercyclic vectors, except zero. 

The following easy lemma will be the key point to obtain this result. Indeed we will only need to use this lemma in the case of $j=1$. 

\begin{lemma}\label{lemmapld}
Let  $A=(n_{k})_{k\geq 1}$ be a strictly increasing sequence of natural numbers. If $A$ has positive lower density, then there exist numbers $M_j\in\mathbb N$, $j\in\mathbb N$, such that 
\begin{equation*}
\label{boundedcondition}
n_{k+j}\leq M_{j}n_{k}
\end{equation*}
 for all $j,k\in\mathbb N$. 
\end{lemma}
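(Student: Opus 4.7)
The plan is to observe that positive lower density forces $(n_k)$ to grow at most linearly in $k$, so that the ratio $n_{k+j}/n_k$ is of order $1+j/k$ and therefore uniformly bounded once $k$ is large.

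First I would fix any $\delta'\in(0,\delta)$ with $\delta := \underline{\text{dens}}(A)>0$. By the definition of lower density there is an $N_0$ such that $\#\{i : n_i\leq N\}\geq \delta'(N+1)$ for every $N\geq N_0$. Evaluating this inequality at $N=n_k$ collapses the left-hand side to $k$, which I would rearrange into the upper bound $n_k \leq k/\delta'$ valid for all $k$ sufficiently large. Combined with the trivial lower bound $n_k \geq k$ coming from strict monotonicity of a sequence of naturals, this delivers
\[
\frac{n_{k+j}}{n_k} \leq \frac{1}{\delta'}\Bigl(1+\frac{j}{k}\Bigr) \leq \frac{1+j}{\delta'}
\]
for all sufficiently large $k$, say $k\geq k_0$.

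Finally, the finitely many initial ratios $n_{k+j}/n_k$ with $1\leq k<k_0$ are each finite, so I would simply take
\[
M_j = \max\Bigl(\bigl\lceil (1+j)/\delta'\bigr\rceil,\ \max_{1\leq k<k_0}\bigl\lceil n_{k+j}/n_k\bigr\rceil\Bigr)
\]
to absorb them uniformly in $k$. There is essentially no obstacle here: the lemma is a direct unpacking of what positive lower density means, and the only mild point of care is obtaining a single constant that works for every $k\in\mathbb N$, which is handled by the finiteness argument for the small indices.
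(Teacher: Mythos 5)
Your proof is correct and rests on the same key observation as the paper's: positive lower density forces the linear upper bound $n_k\leq Ck$, which combined with the trivial bound $n_k\geq k$ gives the uniform ratio estimate. The only (immaterial) differences are that the paper obtains $n_k\leq Mk$ for \emph{all} $k$ at once from the boundedness of $(n_k/k)_k$ and then iterates the $j=1$ case to get $M_j=(2M)^j$, whereas you handle the finitely many small indices separately and obtain the sharper linear constant $M_j\sim(1+j)/\delta'$.
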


\begin{proof}
Since $\underline{\text{dens}}(A)=\liminf_{k\to\infty}\frac{k}{n_{k}}>0$, the sequence $(\frac{n_{k}}{k})_{k}$ is bounded. Hence there exists a natural number $M$ with $n_{k}\leq Mk$ for all $k\in\mathbb N$. Then, for all $k\in\mathbb N$,
$$
n_{k+1}\leq M(k+1)\leq 2Mk\leq 2M n_{k},
$$
which implies the result when we take $M_j=(2M)^j$, $j\in\mathbb N$.
\end{proof}

As usual, $e_n$ denotes the sequence $(0,\ldots,0,1,0,\ldots)$ with the 1 at index $n$.

\begin{proposition}\label{prop:nopowers}
Let us consider the Banach algebra $X=\ell_{p}$, $1\leq p<\infty$, or $X=c_{0}$ endowed with coordinatewise multiplication, and let $\lambda\in\mathbb C$ with $\vert \lambda\vert>1$. If $x\in FHC(\lambda B)$, then there exists a natural number $M$ such that $x^{m}\notin HC(\lambda B)$ for any $m\geq M$.
\end{proposition}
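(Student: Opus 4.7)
The plan is to show that frequent hypercyclicity of $x$ forces its coordinates to decay at least like $|\lambda|^{-n/M}$ for some fixed $M\in\mathbb N$, and then to observe that raising $x$ to a power $m>M$ destroys the possibility of $(\lambda B)^N x^m$ ever approximating a fixed nonzero target.

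First I would fix the convenient target $e_1$. Since $x\in FHC(\lambda B)$, the set
\[
A = \{n\in\mathbb N_0 : \|(\lambda B)^n x - e_1\| < 1/2\}
\]
has positive lower density. In both $\ell_p$ and $c_0$ the norm controls each coordinate, so for every $n\in A$ one reads off the pointwise estimates $|\lambda^n x(n+1)-1|<1/2$ and $|\lambda^n x(n+j)|<1/2$ for every $j\geq 2$. These translate into
\[
|x(n+1)| \leq \tfrac{3}{2}|\lambda|^{-n} \quad\text{and}\quad |x(n+j)| \leq \tfrac{1}{2}|\lambda|^{-n}\ (j\geq 2).
\]

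Second, I would enumerate $A=(n_k)_{k\geq 1}$ increasingly and apply Lemma~\ref{lemmapld} with $j=1$, producing $M\in\mathbb N$ with $n_{k+1}\leq M n_k$ for all $k$. For any integer $N\geq n_1$, let $k$ be maximal with $n_k\leq N$; then $N+1\leq n_{k+1}\leq M n_k$, hence $n_k\geq (N+1)/M$. Treating separately the cases $N=n_k$ (use the first estimate) and $N>n_k$, i.e.\ $N+1-n_k\geq 2$ (use the second estimate), both cases give
\[
|x(N+1)| \leq \tfrac{3}{2}|\lambda|^{-n_k} \leq \tfrac{3}{2}|\lambda|^{-(N+1)/M}.
\]
This global geometric decay bound on $x$, valid for all sufficiently large $N$, is the heart of the argument.

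Third, I would suppose for contradiction that $x^m\in HC(\lambda B)$ for some $m\in\mathbb N$. Density of the orbit in every neighborhood of $e_1$ forces infinitely many $N$ with $|\lambda^N x(N+1)^m-1|<1/2$, hence $|x(N+1)|>(1/2)^{1/m}|\lambda|^{-N/m}$. Combining with the upper bound from the previous step yields
\[
(1/2)^{1/m}|\lambda|^{-N/m} < \tfrac{3}{2}|\lambda|^{-(N+1)/M},
\]
which rearranges to $N\bigl(\tfrac{1}{M}-\tfrac{1}{m}\bigr)\leq C(m)$ for a constant $C(m)$ depending only on $m$ and $|\lambda|$. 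For any $m>M$ the left-hand side is unbounded in $N$, so only finitely many $N$ can satisfy it, contradicting the infinitude obtained from density. Setting the $M$ of the statement to be (the lemma's) $M+1$ finishes the argument.

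The main obstacle is the interpolation step: turning the data at the sparse times $n\in A$ into a decay bound for $x(N+1)$ valid for \emph{every} sufficiently large $N$. This requires using both pieces of information extracted from $n\in A$ (the two-sided control at index $n+1$ \emph{and} the uniform upper bound at subsequent indices), glued together by the gap control $n_{k+1}\leq M n_k$ from Lemma~\ref{lemmapld}. Once that bound is in place, the conclusion is an elementary comparison of two geometric rates.
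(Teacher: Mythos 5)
Your argument is correct and follows essentially the same strategy as the paper's proof: Lemma~\ref{lemmapld} with $j=1$ turns positive lower density of the return set into the gap bound $n_{k+1}\leq Mn_k$, which pits the decay rate $|x(N+1)|\lesssim |\lambda|^{-N/M}$ forced by frequent returns against the lower bound $|x(N+1)|\gtrsim |\lambda|^{-N/m}$ forced by hypercyclicity of $x^m$, and these are incompatible for $m>M$. The only cosmetic difference is that the paper takes the frequently visited target to be $0$ rather than $e_1$, so that $\|(\lambda B)^{n_k}x\|<\varepsilon$ bounds all tail coordinates at once and your two-case interpolation step (and the shift from $M$ to $M+1$) becomes unnecessary.
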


\begin{proof}
Fix $0< \varepsilon<1$. Since $x\in FHC(\lambda B)$ the set 
$$
A=\{n\in\mathbb N:\Vert (\lambda B)^{n}x\Vert<\varepsilon\}
$$
has positive lower density. We enumerate the elements of $A$ as a strictly increasing sequence $(n_{k})_{k}$. By Lemma \ref{lemmapld} there exists a natural number $M$ such that 
\begin{equation}\label{equation:ineqcond}
n_{k+1}\leq Mn_{k}
\end{equation}
for all $k\in\mathbb{N}$. We claim that $x^{m}\notin HC(\lambda B)$ for any $m\geq M$. We proceed by contradiction. Assume this is not the case. Fix $m\geq M$ with $x^{m}\in HC(\lambda B)$. Then there exists a natural number $n\geq n_1$ with 
$$
\Vert (\lambda B)^{n}x^{m}-e_1\Vert <1-\varepsilon^m.
$$
Then we have that
$$
1-\varepsilon^m>\Vert (\lambda B)^{n}x^{m}-e_{1}\Vert \geq \vert \lambda^{n}x^{m}(n+1)-1\vert \geq 1-\vert \lambda^{n}x^{m}(n+1)\vert,
$$
hence $\vert \lambda^{n}x^{m}(n+1)\vert>\varepsilon^m$ and as a consequence
$$
\vert x(n+1)\vert>\frac{\varepsilon}{\vert \lambda\vert^{\frac{n}{m}}}.
$$

Now, there is some $k\in\mathbb{N}$ such that $n_k\leq n <n_{k+1}$. Equation \eqref{equation:ineqcond} implies that
$n < M n_k\leq mn_k$, so that
\[
mn_k-n\geq 0.
\]

But then
\begin{align*}
\varepsilon 
&>\Vert (\lambda B)^{n_{k}}x\Vert \geq \big\vert \big[(\lambda B)^{n_{k}}x\big](n+1-n_{k})\big\vert\\
&=\vert \lambda^{n_{k}}x(n+1)\vert\\
&>\vert \lambda\vert^{n_{k}}\frac{\varepsilon}{\vert \lambda \vert^{\frac{n}{m}}}\\
&=\varepsilon\vert\lambda\vert^{\frac{mn_{k}-n}{m}}\geq \varepsilon,
\end{align*}
which is a contradiction. The proof is complete.
\end{proof}

It is proved in \cite{falgro} that in the present setting the set of hypercyclic vectors for the Rolewicz operator is algebrable. As a consequence of the proposition we see that no hypercyclic algebra for the Rolewicz operator can contain a frequently hypercyclic vector.

\begin{corollary} 
\label{coro:noalgebra}
Let us consider the Banach algebra $X=\ell_{p}$, $1\leq p<\infty$, or $X=c_{0}$ endowed with coordinatewise multiplication. Then for any $\lambda\in\mathbb C$ with $\vert \lambda\vert>1$ the Rolewicz operator $\lambda B$ does not have any frequently hypercyclic algebra. Furthermore, no hypercyclic algebra for $\lambda B$ can contain a frequently hypercyclic vector.
\end{corollary}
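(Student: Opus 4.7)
The plan is to derive both statements directly from Proposition \ref{prop:nopowers} together with the elementary observation that in $\ell_p$ or $c_0$ endowed with coordinatewise multiplication there are no non-trivial nilpotents: if $x\in X$ and $x^m=0$ for some $m\in\mathbb N$, then $x(n)^m=0$ for every $n$, hence $x(n)=0$ for every $n$, so $x=0$. Equivalently, powers of a non-zero vector remain non-zero.

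For the first statement, I would argue by contradiction. Suppose $\mathcal B$ is a frequently hypercyclic algebra for $\lambda B$, and pick any non-zero $x\in\mathcal B$. By assumption $x\in FHC(\lambda B)$, so Proposition \ref{prop:nopowers} provides $M\in\mathbb N$ such that $x^m\notin HC(\lambda B)$ for all $m\geq M$. Since $\mathcal B$ is a subalgebra, $x^m\in\mathcal B$, and by the no-nilpotents observation $x^m\neq 0$. But then $x^m\in\mathcal B\setminus\{0\}\subseteq FHC(\lambda B)\subseteq HC(\lambda B)$, contradicting the choice of $M$.

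For the second statement I would follow the same scheme. Suppose $\mathcal B$ is a hypercyclic algebra for $\lambda B$ that contains some $x\in FHC(\lambda B)$; note that $x\neq 0$ since $0\notin FHC(\lambda B)$. Apply Proposition \ref{prop:nopowers} to obtain $M$ with $x^m\notin HC(\lambda B)$ for $m\geq M$. Again $x^m\in\mathcal B$, and $x^m\neq 0$ by the no-nilpotents observation, so $x^m\in\mathcal B\setminus\{0\}\subseteq HC(\lambda B)$, a contradiction.

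There is essentially no obstacle here; the corollary is a clean packaging of the proposition, and the only substantive ingredient beyond it is the absence of non-trivial nilpotents in these coordinatewise algebras, which is immediate. The whole proof reduces to making explicit that a subalgebra containing a frequently hypercyclic vector $x$ must also contain all of its powers $x^m$, and these powers cannot all be $0$.
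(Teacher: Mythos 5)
Your proposal is correct and is exactly the argument the paper intends: the corollary is stated as an immediate consequence of Proposition \ref{prop:nopowers}, and the only ingredient you add --- that a non-zero vector in $\ell_p$ or $c_0$ under coordinatewise multiplication has non-zero powers --- is precisely the (trivial) point needed to pass from ``$x^m\notin HC(\lambda B)$'' to a contradiction with the algebra being contained in $HC(\lambda B)\cup\{0\}$. No gaps.
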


\begin{remark} 
One might wonder whether Proposition \ref{prop:nopowers} can be further strengthened: could it be that if $x$ is frequently hypercyclic for $\lambda B$ then $x^m$ is not even supercyclic for sufficiently large $m$? Recall that a vector $x\in X$ is called \textit{supercyclic} for an operator $T$ on $X$ if the set $\{\alpha T^nx : \alpha \in\mathbb{C}, n\in \mathbb{N}_0\}$ is dense in $X$. The supposed strengthening of the proposition fails dramatically: in our setting, any power $x^m$, $m\in \mathbb{N}$, of any supercyclic vector $x$ (and hence also of any frequently hypercyclic vector) is supercyclic. Indeed, let $x$ be supercyclic for $\lambda B$. Let $m\in \mathbb{N}$, and let $y\in X$ be a finite sequence. Choose a vector $z\in X$ such that $z^m=y$. Then there exist sequences $(\alpha_k)_k$ of complex numbers and $(n_k)_k$ of natural numbers such that $\alpha_k (\lambda B)^{n_k} x \to z$ as $k\to\infty$. It follows from the continuity of taking powers in $X$ that $\alpha_k^m (\lambda^{(m-1)n_k}) (\lambda B)^{n_k} x^m \to y$ as $k\to\infty$. This implies that $x^m$ is supercyclic for $\lambda B$.
\end{remark}

\section{The Rolewicz operator and $\mathcal{A}$-hypercyclicity}\label{s-ahyp}

Even though the algebras $X=\ell_{p}$, $1\leq p<\infty$, or $X=c_{0}$ under coordinatewise multiplication cannot contain an algebra of frequently hypercyclic vectors for the Rolewicz operator $\lambda B$, a weaker result holds. We show that there are algebras of upper frequently hypercyclic vectors. We obtain this result in the broader context of $\mathcal{A}$-hypercyclicity. Recall that a non-empty family $\mathcal{A}$ of subsets of $\mathbb{N}_0$ is called a \textit{Furstenberg family} if $\varnothing\notin \mathcal{A}$ and if $A\in\mathcal{A}$ and $A\subset B\subset\mathbb{N}_0$ implies that $B\in\mathcal{A}$. The Furstenberg family $\mathcal{A}$ is called \textit{finitely invariant (f.i.)} if, for any $A\in\mathcal{A}$, $A\setminus [0,N]\in\mathcal{A}$ for any $N\geq 0$. Now, an operator $T$ on a Banach space $X$ is called \textit{$\mathcal{A}$-hypercyclic} if there is a vector $x\in X$, called \textit{$\mathcal{A}$-hypercyclic for $T$}, such that, for any non-empty open set $U\subset X$,
\[
\{n\geq 0 : T^n x\in U\}\in \mathcal{A}.
\]
We refer to \cite{BMPP16} and \cite{BoGE17} for these notions. 

Frequent hypercyclicity is $\mathcal{A}$-hypercyclicity for the family $\mathcal{A}$ of sets of positive lower density, while upper frequent hypercyclicity is defined by the family of sets of positive upper density. 

We start with the following very general observation.

\begin{proposition}\label{prop:rulethemall}
Let $X=\ell_{p}$, $1\leq p<\infty$, or $X=c_{0}$, be considered as a Banach algebra under coordinatewise multiplication. Let $\mathcal{A}$ be an f.i.\ Furstenberg family, $x_{0}\in X$ and $m\in \mathbb{N}$. If $x_{0}^{m}$ is $\mathcal{A}$-hypercyclic for $\lambda B$, $|\lambda|>1$, then so
is $\sum_{\nu=m}^{N}\alpha_{\nu}x_{0}^{\nu}$ for any $\alpha_{m},\ldots,\alpha_{N}\in\mathbb C$, $N\geq m$, with $\alpha_m\neq 0$. 
\end{proposition}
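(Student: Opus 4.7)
The plan is to exploit the factorization
$$y := \sum_{\nu=m}^N \alpha_\nu x_0^\nu = x_0^m \cdot P(x_0),$$
where $P(t) = \alpha_m + \alpha_{m+1}t + \cdots + \alpha_N t^{N-m}$ and the multiplication is coordinatewise. Then
$$[(\lambda B)^n y](i) = \lambda^n x_0(n+i)^m\, P(x_0(n+i)) = [(\lambda B)^n x_0^m](i)\cdot P(x_0(n+i)).$$
Since $x_0\in X\subseteq c_0$, the shifted tails $B^n x_0$ satisfy $\|B^n x_0\|\to 0$ as $n\to\infty$, so the factor $P$ evaluated at the entries of $B^n x_0$ is uniformly close to $\alpha_m$ for large $n$.

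Fix a target $v\in X$ and $\varepsilon>0$, and set $w := v/\alpha_m$ (legitimate since $\alpha_m\neq 0$). I would write
$$(\lambda B)^n y - v = \alpha_m\bigl((\lambda B)^n x_0^m - w\bigr) + (\lambda B)^n x_0^m \cdot q_n,$$
where $q_n\in X$ has $i$-th coordinate $P(x_0(n+i)) - \alpha_m$. Banach-algebra submultiplicativity then controls the second summand by $\|(\lambda B)^n x_0^m\|\cdot\|q_n\|$. Since $P(z)-\alpha_m$ vanishes at $z=0$, a linear bound $|P(z)-\alpha_m|\leq C|z|$ for $|z|\leq \|x_0\|$ gives $\|q_n\|\leq C\|B^n x_0\|\to 0$. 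Consequently, for $\delta>0$ small enough and $n$ large enough (both chosen in terms of $\|w\|$ and $\varepsilon$), the condition $\|(\lambda B)^n x_0^m - w\|<\delta$ forces $\|(\lambda B)^n y - v\|<\varepsilon$.

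To conclude, I would invoke the $\mathcal{A}$-hypercyclicity of $x_0^m$: the set $A := \{n : \|(\lambda B)^n x_0^m - w\|<\delta\}$ lies in $\mathcal{A}$; the f.i.\ property of $\mathcal{A}$ lets us discard the initial segment on which the estimate above has not yet kicked in and stay in $\mathcal{A}$; upward closedness of the Furstenberg family then delivers $\{n : \|(\lambda B)^n y - v\|<\varepsilon\}\in\mathcal{A}$. Varying $v$ and $\varepsilon$ gives $\mathcal{A}$-hypercyclicity of $y$.

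The main obstacle is showing $\|q_n\|\to 0$ in the ambient norm, not just pointwise in each coordinate. This is exactly what the containment $X\subseteq c_0$ together with the submultiplicative coordinatewise product delivers; without decaying coordinates for $x_0$ or without norm control on pointwise powers, the error factor $P(x_0)-\alpha_m$ would not become negligible along the orbit and the reduction to $\mathcal{A}$-hypercyclicity of $x_0^m$ would collapse.
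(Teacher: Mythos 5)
Your proposal is correct and is essentially the paper's own argument: your error term $q_n$ with coordinates $P(x_0(n+i))-\alpha_m$ is exactly the paper's $\sum_{\nu=m+1}^{N}\alpha_{\nu}B^n(x_0^{\nu-m})$, which is shown to tend to $0$ and then controlled by submultiplicativity against $(\lambda B)^n x_0^m$, followed by the same finite-invariance/upward-closure step. The only cosmetic differences are that you rescale the target $v$ by $\alpha_m$ instead of normalizing $\alpha_m=1$, and you bound $\|q_n\|$ via a Lipschitz estimate on $P$ rather than termwise via $B^n(x_0^{\nu-m})\to 0$.
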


\begin{proof}
Since for $\alpha\neq 0$, $\alpha x$ is $\mathcal{A}$-hypercyclic if and only if $x$ is, we may assume that $\alpha_m=1$. Then the fact that the backward shift is multiplicative implies that
\begin{align*}
(\lambda B)^n\Big(x_0^m+\sum_{\nu=m+1}^{N}\alpha_{\nu}x_0^\nu\Big)- (\lambda B)^n x_0^m &=\sum_{\nu=m+1}^{N}(\lambda B)^n(\alpha_{\nu}x_{0}^{\nu})\\
&= \sum_{\nu=m+1}^{N}\alpha_{\nu}B^n (x_{0}^{\nu-m})(\lambda B)^n(x_{0}^{m})\\
&=(\lambda B)^n (x_0^m) \sum_{\nu=m+1}^{N}\alpha_{\nu}B^n (x_{0}^{\nu-m}).
\end{align*}
Now, since $B^n x\to 0$ as $n\to\infty$ for any $x\in X$, we have that
\[
\sum_{\nu=m+1}^{N}\alpha_{\nu}B^n (x_{0}^{\nu-m}) \to 0
\]
in $X$ as $n\to\infty$. Then the submultiplicativity of the norm in $X$ easily implies that $x_0^m+\sum_{\nu=m+1}^{N}\alpha_{\nu}x_0^\nu$ is $\mathcal{A}$-hypercyclic as soon as $x_0^m$ is.
\end{proof}

\begin{remark} 
We note that the result holds in even greater generality. It remains true, with essentially the same proof, for any weighted backward shift $B_w$ on any Fr\'echet sequence algebra $X$ under coordinatewise multiplication provided only that the (unweighted) backward shift $B$ is an operator on $X$ so that $B^nx\to 0$ for all $x\in X$; see \cite{falgro} for the relevant notions.
\end{remark}

The proposition applies, in particular, to frequent hypercyclicity.

\begin{corollary}
Let $X=\ell_{p}$, $1\leq p<\infty$, or $X=c_{0}$, be considered as a Banach algebra under coordinatewise multiplication. Let $x_0\in X$ and $m\in \mathbb{N}$. If $x_{0}^{m}$ is frequently hypercyclic for $\lambda B$, $|\lambda|>1$, then so is $\sum_{\nu=m}^{N}\alpha_{\nu}x_{0}^{\nu}$ for any $\alpha_{m},\ldots,\alpha_{N}\in\mathbb C$, $N\geq m$, with $\alpha_m\neq 0$.
\end{corollary}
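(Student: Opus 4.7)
The plan is to obtain this as an immediate specialization of Proposition \ref{prop:rulethemall}, taking $\mathcal{A}$ to be the Furstenberg family
\[
\mathcal{A}_{\mathrm{FH}}=\{A\subset\mathbb{N}_0 : \underline{\operatorname{dens}}(A)>0\}.
\]
The two things to check are: (i) $\mathcal{A}_{\mathrm{FH}}$ satisfies the hypotheses on $\mathcal{A}$ in Proposition \ref{prop:rulethemall} (nonempty Furstenberg family, finitely invariant); (ii) $\mathcal{A}_{\mathrm{FH}}$-hypercyclicity of a vector $y$ for $\lambda B$ coincides with frequent hypercyclicity of $y$ for $\lambda B$. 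Granted these, a direct application finishes the proof.

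For (i), I would note that $\varnothing\notin\mathcal{A}_{\mathrm{FH}}$ since $\underline{\operatorname{dens}}(\varnothing)=0$, while $\mathbb{N}_0\in\mathcal{A}_{\mathrm{FH}}$. If $A\in\mathcal{A}_{\mathrm{FH}}$ and $A\subset B\subset\mathbb{N}_0$, then monotonicity of $\underline{\operatorname{dens}}$ gives $\underline{\operatorname{dens}}(B)\geq\underline{\operatorname{dens}}(A)>0$, so $B\in\mathcal{A}_{\mathrm{FH}}$. For finite invariance, removing a finite initial block $[0,N]$ changes the counting function $\operatorname{card}(A\cap[0,N'])$ by at most $N+1$, which is negligible when divided by $N'+1$ as $N'\to\infty$; hence $\underline{\operatorname{dens}}(A\setminus[0,N])=\underline{\operatorname{dens}}(A)>0$.

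For (ii), this is exactly the definition: $y\in FHC(\lambda B)$ means that for every nonempty open $U\subset X$ the return set $\{n\geq 0 : (\lambda B)^n y\in U\}$ has positive lower density, i.e.\ lies in $\mathcal{A}_{\mathrm{FH}}$.

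With (i) and (ii) in hand, Proposition \ref{prop:rulethemall} applied to $\mathcal{A}=\mathcal{A}_{\mathrm{FH}}$ yields the conclusion: if $x_0^m$ is frequently hypercyclic for $\lambda B$, then so is $\sum_{\nu=m}^N \alpha_\nu x_0^\nu$ for any choice of $\alpha_m\neq 0$ and $\alpha_{m+1},\ldots,\alpha_N\in\mathbb{C}$. There is no real obstacle here; the corollary is purely a matter of unpacking definitions and citing the proposition already proved.
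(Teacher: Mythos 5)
Your proposal is correct and is exactly the paper's intended argument: the corollary is stated as an immediate application of Proposition \ref{prop:rulethemall} to the family of sets of positive lower density, which the paper has already identified as the Furstenberg family defining frequent hypercyclicity. Your verification that this family is a finitely invariant Furstenberg family is routine and accurate, so there is nothing to add.
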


We can now obtain a characterization of the f.i.\ Furstenberg families $\mathcal{A}$ for which $\lambda B$ admit an algebra of $\mathcal{A}$-hypercyclic vectors.

\begin{theorem}\label{thm:charahcalg}
Let $X=\ell_{p}$, $1\leq p<\infty$, or $X=c_{0}$, be considered as a Banach algebra under coordinatewise multiplication. Let $\mathcal{A}$ be an f.i.\ Furstenberg family and $|\lambda|>1$. Then $\lambda B$ admits an algebra of $\mathcal{A}$-hypercyclic vectors, except zero, if and only if there exists a family $(A(l,m))_{l,m\geq 1}$ of pairwise disjoint sets in $\mathcal{A}$ such that, for any $l,m,l',m'\geq 1$ and for any $n\in A(l,m)$, $n'\in A(l',m')$ with $n'>n$, we have that
\begin{equation}\label{charcond}
n'\geq n+l\quad\text{and}\quad n'\frac{m}{m'}\geq n+l+l'.
\end{equation}
\end{theorem}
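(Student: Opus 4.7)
The proof splits into two directions. For $(\Leftarrow)$, I would use the family $(A(l,m))$ to construct explicitly a vector $x_0\in X$ whose every power $x_0^m$ is $\mathcal{A}$-hypercyclic for $\lambda B$; invoking Proposition~\ref{prop:rulethemall} separately for each $m$ then yields that every non-zero element $\sum_{\nu=m}^{N}\alpha_\nu x_0^\nu$ of the polynomial algebra generated by $x_0$ is $\mathcal{A}$-hypercyclic, which gives the desired algebra. For the construction, fix a dense sequence $(P_l)_{l\geq 1}$ of finitely supported vectors in $X$ with $\mathrm{supp}(P_l)\subseteq\{1,\ldots,l\}$ and positive scaling factors $\varepsilon_m$ decaying rapidly enough for summability, and set
\[
x_0(j)=\bigl(\varepsilon_m P_l(k)/\lambda^n\bigr)^{1/m}\quad\text{whenever }j=n+k,\ n\in A(l,m),\ k\in\{1,\ldots,l\},
\]
using the principal $m$-th root, and $x_0(j)=0$ otherwise. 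The first inequality $n'\geq n+l$ together with pairwise disjointness of the $A(l,m)$ ensures that the intervals $\{n+1,\ldots,n+l\}$ are disjoint for different $n$'s, making this assignment unambiguous.

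For $n\in A(l,m)$ the coordinates $k\in\{1,\ldots,l\}$ of $(\lambda B)^n x_0^m$ equal $\varepsilon_m P_l(k)$ exactly. A coordinate $k>l$ is nonzero only if $n+k=n''+k''$ for some $n''\in A(l'',m'')$ and $k''\in\{1,\ldots,l''\}$: the case $n''<n$ is ruled out because the first inequality applied to the pair $(n'',n)$ gives $n''+k''\leq n''+l''\leq n$; the case $n''=n$ is ruled out by disjointness (it forces $(l'',m'')=(l,m)$, hence $k''=k\leq l$); so $n''>n$, and the second inequality then yields the exponential bound $|\lambda|^{n-n''m/m''}\leq|\lambda|^{-(l+l'')}$ on the resulting coordinate. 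Summing these contributions over $(l'',m'',n'')$ is controlled by the decay of $\varepsilon_m$, which also forces $x_0\in X$. Therefore $(\lambda B)^n x_0^m$ is close to $\varepsilon_m P_l$ for $n\in A(l,m)\in\mathcal{A}$, and since $(\varepsilon_m P_l)_l$ is dense in $X$ for each fixed $m$, each $x_0^m$ is $\mathcal{A}$-hypercyclic.

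For $(\Rightarrow)$, pick $x_0\neq 0$ in the algebra; every $x_0^m$ is $\mathcal{A}$-hypercyclic. The natural candidate is $A(l,m)=\{n\geq N_{l,m}:\|(\lambda B)^n x_0^m-e_l\|<\eta_{l,m}\}\in\mathcal{A}$ (using the f.i.\ property) for rapidly decaying $\eta_{l,m}$ and large $N_{l,m}$. The approximation forces $|x_0(n+l)|\approx|\lambda|^{-n/m}$, and coupling this with the analogous estimate at $n'\in A(l',m')$ at the tail coordinate $n'+l'-n$ of $(\lambda B)^n x_0^m$ yields $|\lambda|^{n-n'm/m'}\lesssim\eta_{l,m}$, which for small $\eta_{l,m}$ gives the second condition $n'm/m'\geq n+l+l'$. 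Pairwise disjointness follows from the incompatibility of the candidate values of $|x_0(n+l)|$ for distinct $(l,m)$ at large $n$. The hardest step is the first condition $n'\geq n+l$ in the regime $m>m'$, which is not ruled out by the tail bound alone: I would handle it by removing the exceptional collisions $n'+l'=n+l$ (finitely many rational $n$'s per quadruple, absorbed into $N_{l,m}$), and by a diagonal enumeration of the quadruples $(l,m,l',m')$ that refines $A(l,m)$ while preserving membership in $\mathcal{A}$ via the f.i.\ property.
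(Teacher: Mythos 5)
Your sufficiency argument is essentially the paper's: your $x_0$ is, up to normalization, the same vector as the paper's $x=\sum_{l,m}\sum_{n\in A(l,m)}\lambda^{-n/m}F^n y_l^{1/m}$, and the verification follows the same lines. Two technical points you gloss over do need care: the error sum over the infinitely many $n''\in A(l'',m'')$ with $n''>n$ does not converge merely because each term is bounded by $|\lambda|^{-(l+l'')}$ --- one must exploit the separation \emph{within} each $A(l'',m'')$ (i.e.\ \eqref{charcond} applied with $(l,m)=(l',m')$) --- and the powers $P_{l''}^{m/m''}$ with $m>m''$ must be controlled in norm; the paper arranges both by replacing $A(l,m)$ with $A(l+m,m)$, normalizing $\|y_l\|\leq l$, and thinning via \eqref{fi2}. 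These are fixable, so the left-to-right half of your plan is sound in outline.

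The genuine gap is in necessity. Your candidate sets $A(l,m)=\{n:\|(\lambda B)^n x_0^m-e_l\|<\eta_{l,m}\}$ cannot yield the second inequality in \eqref{charcond}. That inequality, $n'\frac{m}{m'}\geq n+l+l'$, contains the term $+l'$ attached to the set of the \emph{larger} index $n'$; to obtain it one must pair an upper bound on $|x_0(n'+l')|$ coming from the approximation at $n$ (this gives the $+l$, via $\eta_{l,m}$) with a \emph{lower} bound coming from the approximation at $n'$. With target $e_{l'}$ the only available lower bound is $|x_0(n'+l')|\gtrsim|\lambda|^{-n'/m'}$, with a constant independent of $l'$, so you only get $n'\frac{m}{m'}\geq n+c_{l,m}$; since $\eta_{l,m}$ cannot depend on $l'$ and $l'$ is unbounded, the $+l'$ is unobtainable. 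This is exactly why the paper approximates the targets $z_{l,m}=\rho_{l,m}e_1+2\varepsilon^{lm}\sum_{k=2}^{l+1}e_k$ with the large spike $\rho_{l',m'}=|\lambda|^{l'm'}+1$: the approximation at $n'$ then gives $|x(n'+1)|\geq|\lambda|^{l'}|\lambda|^{-n'/m'}$, and the extra factor $|\lambda|^{l'}$ produces the $+l'$. The block of small but nonzero coordinates $2\varepsilon^{lm}$ in positions $2,\ldots,l+1$ of $z_{l,m}$ is equally essential: it provides a lower bound on $|x(n'+2)|$ whenever $n\leq n'\leq n+l-1$, which is what excludes $n<n'<n+l$ in the regime $m'<m$ and what forces pairwise disjointness. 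Your single-spike target loses this tool, and the proposed fallback of removing the collisions $n'+l'=n+l$ does not repair it: for a fixed quadruple $(l,m,l',m')$ there are infinitely many such pairs, not finitely many, and a Furstenberg family is not closed under deleting an infinite subset, so the thinned set need not remain in $\mathcal{A}$.
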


\begin{proof} We first show that the condition is necessary. To see this, we write
\begin{equation}\label{rho}
\rho_{l,m} = |\lambda|^{lm}+1, \quad l,m\geq 1,
\end{equation}
and we choose $\varepsilon>0$ such that $3\varepsilon < \min(1,|\lambda|^{-1})$. We consider the points
\begin{equation}\label{zed}
z_{l,m} = \rho_{l,m}e_1 + 2 \varepsilon^{lm} \sum_{k=2}^{l+1} e_k \in X, \quad l,m\geq 1.
\end{equation}

By assumption there is a point $x\in X$ so that all of its powers $x^m$, $m\geq 1$, are $\mathcal{A}$-hypercyclic for $\lambda B$. Hence, for any $l,m\geq 1$, there is a set $A(l,m)\in\mathcal{A}$ such that, for any $n\in A(l,m)$,
\begin{equation}\label{approx}
\|(\lambda B)^n x^m - z_{l,m}\|\leq \varepsilon^{lm}.
\end{equation}
Since $\mathcal{A}$ is finitely invariant we may assume that 
\begin{equation}\label{condfi}
|\lambda|^{\frac{n}{m^2}} > 3\varepsilon^{-l}, \quad n\in A(l,m).
\end{equation}

First we prove the second condition in \eqref{charcond}. Fix  $l,m,l',m'\geq 1$, $n\in A(l,m)$ and $n'\in A(l',m')$ with $n'>n$. Then we have by \eqref{zed} and \eqref{approx} that
\[
|\lambda^{n'} x^{m'}(n'+1) - \rho_{l',m'}|\leq \varepsilon^{l'm'}\leq 1,
\]
so that by \eqref{rho} and the triangle inequality we obtain that
\[
(|\lambda|^{l'm'}+1) - |\lambda^{n'} x^{m'}(n'+1)| \leq 1.
\]
Hence
\[
|\lambda^{n'} x^{m'}(n'+1)|\geq |\lambda|^{l'm'},
\]
and therefore
\begin{equation}\label{ix}
|x(n'+1)|\geq |\lambda|^{l'-\frac{n'}{m'}}.
\end{equation}

On the other hand, \eqref{zed} and \eqref{approx} also imply that, if $n<n'\leq n+l$,
\[
|\lambda^{n} x^{m}(n'+1) - 2\varepsilon^{lm}|\leq \varepsilon^{lm}
\]
and if $n'> n+l$ then
\[
|\lambda^{n} x^{m}(n'+1)|\leq \varepsilon^{lm};
\]
since $3\varepsilon \leq |\lambda|^{-1}$, we deduce that for any $n'>n$
\[
|\lambda^{n} x^{m}(n'+1)|\leq 3\varepsilon^{lm} \leq (3\varepsilon)^{lm} \leq |\lambda|^{-lm}
\]
and therefore
\begin{equation}\label{ix2}
|x(n'+1)|\leq |\lambda|^{-l-\frac{n}{m}}.
\end{equation}

From \eqref{ix} and \eqref{ix2} we obtain that
\[
|\lambda|^{\frac{n'}{m'}-l'}\geq |\lambda|^{\frac{n}{m}+l},
\]
hence
\[
\frac{n'}{m'}\geq \frac{n}{m}+l +l',
\]
which implies the second condition in \eqref{charcond}. This condition also implies the first condition in \eqref{charcond} in the case that $m\leq m'$. 

For the remainder of the proof of necessity we will first show that if $l,m,l',m'\geq 1$, $n\in A(l,m)$ and $n'\in A(l',m')$, then 
\begin{equation}\label{hypo}
n\leq n'\leq n+l-1.
\end{equation}
implies that
\begin{equation}\label{ix5}
|\lambda|^{n(\frac{1}{m'}-\frac{1}{m})}\leq 3 \varepsilon^{l'-l}.
\end{equation}
Indeed, since then $n+2\leq n'+2\leq n+l+1$, we get from \eqref{zed} and \eqref{approx} that
\[
|\lambda^{n} x^{m}(n'+2) - 2\varepsilon^{lm}|\leq \varepsilon^{lm},
\]
hence 
\[
|\lambda^{n} x^{m}(n'+2)|\geq \varepsilon^{lm}
\]
and therefore 
\begin{equation}\label{ix3}
|x(n'+2)|\geq \varepsilon^{l}|\lambda|^{-\frac{n}{m}}.
\end{equation}

On the other hand, we get from \eqref{zed} and \eqref{approx} that
\[
|\lambda^{n'} x^{m'}(n'+2) - 2\varepsilon^{l'm'}|\leq \varepsilon^{l'm'},
\]
hence 
\[
|\lambda^{n'} x^{m'}(n'+2)|\leq 3\varepsilon^{l'm'}
\]
and therefore 
\begin{equation}\label{ix4}
|x(n'+2)|\leq 3^{\frac{1}{m'}}\varepsilon^{l'}|\lambda|^{-\frac{n'}{m'}}\leq 3 \varepsilon^{l'}|\lambda|^{-\frac{n}{m'}}.
\end{equation}
Combining \eqref{ix3} and \eqref{ix4} we obtain \eqref{ix5}.

In particular, \eqref{hypo} implies that $m'\geq m$. Indeed, if $m'< m$, then
\[
\frac{1}{m'}-\frac{1}{m}\geq \frac{1}{m^2},
\]
and hence, by \eqref{ix5}, 
\[
|\lambda|^{\frac{n}{m^2}}\leq 3 \varepsilon^{-l},
\]
which contradicts \eqref{condfi}.

We can now show that the first condition in \eqref{charcond} also holds in the case that $m'< m$. By the above, \eqref{hypo} cannot hold, and thus $n'> n$ implies that $n'\geq n+l$.

This concludes the proof of condition \eqref{charcond}.

It remains to show that the sets $A(l,m)$, $l,m\geq 1$, are pairwise disjoint. Indeed, suppose that $n\in A(l,m)$, $n'\in A(l',m')$ with $n=n'$ and $(l,m)\neq (l',m')$. If $m= m'$, then we may assume without loss of generality that $l'>l$. But since \eqref{hypo} holds, we have inequality \eqref{ix5}, which now implies that $1\leq 3\varepsilon$, which contradicts the choice of $\varepsilon$. Thus $m\neq m'$, and we may assume without loss of generality that $m'<m$. But then we have already seen that \eqref{hypo} leads to a contradiction. 

This finishes the proof of necessity. As for sufficiency, it suffices, by Proposition \ref{prop:rulethemall}, to show that there is some point $x\in X$ so that all of its powers $x^m$, $m\geq 1$, are $\mathcal{A}$-hypercyclic. To this end, let $(y_l)_{l\geq 1}$ be a dense sequence of finite sequences in $X$ such that $\|y_l\|\leq l$ and $s_l\leq l$, where $s_l$ is the largest index of the non-zero coordinates of $y_l$. For $m\geq 1$, let $y_l^{\frac{1}{m}}$ be any fixed $m$-th root of $y_l$, and we denote by $y_l^{\frac{j}{m}}$, $j\geq 1$, the $j$th power of the latter number. We adopt the same convention for $\lambda^{\frac{1}{m}}$ and $\lambda^{\frac{j}{m}}$.

We first note that 
\begin{equation}\label{yl}
\|y_l^{\frac{j}{m}}\| \leq l^{\max(\frac{j}{m},1)}
\end{equation}
for all $l,j,m\geq 1$. For $X=c_0$ this is obvious. For $X=\ell_p$, $1\leq p<\infty$, if $j\geq m$ this follows immediately from the fact that $\|y_l\|_\beta\leq \|y_l\|_\alpha$ whenever $\beta\geq \alpha\geq 1$; if $j\leq m$ then this follows from H\"older's inequality:
\[
\sum_{k=1}^{s_l} |y_l(k)|^{p\frac{j}{m}} \leq \Big(\sum_{k=1}^{s_l} |y_l(k)|^p\Big)^{\frac{j}{m}} {s_l}^{1-\frac{j}{m}} \leq l^{p\frac{j}{m}} l^{1-\frac{j}{m}}\leq l^p.
\]

Now let $(A(l,m))_{l,m\geq 1}$ be a family of pairwise disjoint sets in $\mathcal{A}$ that satisfies the hypothesis. In replacing $(A(l,m))_{l,m\geq 1}$ by $(A(l+m,m))_{l,m\geq 1}$ we see that we may assume that, for any $l,m,l',m'\geq 1$, and for any $n\in A(l,m)$, $n'\in A(l',m')$ with $n'>n$, we have that
\begin{equation}\label{charcond2}
n'\geq n+l\quad\text{and}\quad n'\frac{m}{m'}\geq n+m'+l+l'.
\end{equation}
In addition, since $\mathcal{A}$ is finitely invariant, we may assume that, for any $l,m\geq 1$, 
\begin{equation}\label{fi2}
\sum_{n\in A(l,m)} \frac{1}{|\lambda|^{\frac{n}{m}}}\leq \frac{1}{l2^{l+m}}.
\end{equation}

We may now construct the desired sequence $x$. Denoting by $F$ the forward shift
\[
F(x(1),x(2),x(3),\ldots) = (0, x(1), x(2),\ldots)
\]
we define
\[
x=\sum_{l,m\geq 1} \sum_{n\in A(l,m)} \frac{1}{\lambda^\frac{n}{m}} F^n y_l^{\frac{1}{m}}.
\]
Since, with \eqref{yl},
\[
 \sum_{n\in A(l,m)} \Big\|\frac{1}{\lambda^\frac{n}{m}} F^n y_l^{\frac{1}{m}}\Big\| \leq l\sum_{n\in A(l,m)} \frac{1}{|\lambda|^\frac{n}{m}},
\]
it follows from \eqref{fi2} that the series defining $x$ converges in $X$. 

It remains to show that, for every $m'\geq 1$, the power $x^{m'}$ of $x$ is $\mathcal{A}$-hypercyclic. To see this, let $l'\geq 1$ and $n'\in A(l',m')$. Then
\[
T^{n'}x^{m'} = y_{l'} + \sum_{l,m\geq 1}\sum_{\substack{n\in A(l,m)\\n>n'}} \frac{\lambda^{n'}}{\lambda^{n\frac{m'}{m}}} F^{n-n'} y_l^{\frac{m'}{m}};
\]
here we have used that the sets $A(l,m)$ are pairwise disjoint, that the terms with $n<n'$ vanish since $n'-n\geq l\geq s_l$ for any $n\in A(l,m)$, $l,m\geq 1$, and that the operator $F$ is multiplicative so that it commutes with taking powers. 

We therefore conclude that
\begin{align*}
\|T^{n'}x^{m'}-y_{l'}\| &\leq \sum_{l,m\geq 1}\sum_{\substack{n\in A(l,m)\\n>n'}} \frac{1}{|\lambda|^{n\frac{m'}{m}-n'}} \|y_l^{\frac{m'}{m}}\|\\
&\leq \sum_{l,m\geq 1}\Big(\sum_{k\geq m+l+l'} \frac{1}{|\lambda|^{k}}\Big) l^{\max(\frac{m'}{m},1)}\quad (\text{by \eqref{charcond2} and \eqref{yl}})\\
&=C\sum_{l,m\geq 1}\frac{1}{|\lambda|^{m+l+l'}} l^{\max(\frac{m'}{m},1)}\\
&\leq C\Big( \sum_{1\leq m < m'} \sum_{l\geq 1}\frac{l^{m'}}{|\lambda|^{l}} + \sum_{m\geq m'}\frac{1}{|\lambda|^{m}}\sum_{l\geq 1}\frac{l}{|\lambda|^{l}}\Big) \frac{1}{|\lambda|^{l'}}\\
&=C_{m'}\frac{1}{|\lambda|^{l'}},
\end{align*}
where $C=\frac{1}{1-|\lambda|^{-1}}$ and $C_{m'}$ is a constant that only depends on $\lambda$ and $m'$. Now, since $\frac{1}{|\lambda|^{l'}}\to 0$ as $l'\to\infty$, the sequence $(y_l)$ is dense in $X$, $A(m',l')\in\mathcal{A}$ for all $m',l'\geq 1$ and $\mathcal{A}$ is f.i., we deduce that $x^{m'}$ is $\mathcal{A}$-hypercyclic, as had to be shown.
\end{proof} 

It is easy to see that the theorem applies to upper frequent hypercyclicity. But the conclusion even holds for a stronger notion of $\mathcal{A}$-hypercyclicity that is defined by some lower type density. Indeed, the lower logarithmic density of a set $A\subset\mathbb{N}_0$ is defined by
\[
\text{log-}\underline{\text{dens}} (A) = \liminf_{N\to\infty}\frac{\sum_{1\leq n\leq N, n\in A}\frac{1}{n}}{\sum_{1\leq n\leq N}\frac{1}{n}}.
\]
Let $\mathcal{A}_{\underline{\log}}$ be the Furstenberg family of all sets of positive lower logarithmic density, which is obviously finitely invariant. Now, since for any subset $A\subset\mathbb{N}_0$,
\[
\text{log-}\underline{\text{dens}} (A)\leq \overline{\text{dens}} (A),
\]
see \cite[Lemma 2.8]{ErMo17}, we have that $\mathcal{A}_{\underline{\log}}$-hypercyclicity implies upper frequent hypercyclicity.

\begin{corollary} \label{cor:loghcalg}
Let $X=\ell_{p}$, $1\leq p<\infty$, or $X=c_{0}$, be considered as a Banach algebra under coordinatewise multiplication. Then, for any $|\lambda|>1$, $\lambda B$ admits an algebra of $\mathcal{A}_{\underline{\log}}$-hypercyclic vectors, except zero. In particular, it admits an algebra of upper frequently hypercyclic vectors, except zero.
\end{corollary}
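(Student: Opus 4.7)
The plan is to apply Theorem \ref{thm:charahcalg} with $\mathcal{A} = \mathcal{A}_{\underline{\log}}$, which is easily seen to be finitely invariant. The task thus reduces to exhibiting a pairwise disjoint family $(A(l,m))_{l,m\geq 1}$ of sets of positive lower logarithmic density obeying the growth condition \eqref{charcond}; the second assertion of the corollary is then immediate from the inequality $\text{log-}\underline{\text{dens}} \leq \overline{\text{dens}}$ recorded in the paragraph just above.

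The natural arena for lower logarithmic density is that of doubly-exponentially spaced intervals. I would fix an enumeration $\mathbb{N}^2 = \{(l_j, m_j) : j \geq 1\}$ and partition a tail of $2\mathbb{N}$ into disjoint sets $S_j = \{2 \cdot 2^{j-1}(2i-1) + K_0 : i \geq 1\}$, where $K_0$ is a large threshold that will be chosen in terms of the enumeration. Each $S_j$ has positive lower density in $\mathbb{N}$, and elements of distinct $S_j$'s are separated by at least $2$. I would then define
\[
A(l_j, m_j) = \bigcup_{k \in S_j} \bigl[\,e^{2^k},\,e^{2^{k+1}}\,\bigr) \cap \mathbb{N}.
\]
Positive lower logarithmic density of $A(l_j, m_j)$ follows from a direct geometric-series computation: the $k$-th block carries logarithmic measure $2^k$, which dominates the sum of all earlier blocks, so even at the worst $s = \log N$ the cumulative log-measure is a fixed positive fraction of $s$. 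For the growth condition \eqref{charcond}, if $n$ lies in the $k$-th block and $n'$ in the $k'$-th block with $n' > n$, the $2$-separation gives $k' \geq k+2$, hence
\[
\frac{n'}{n} \geq e^{2^{k'}-2^{k+1}} \geq e^{2^{k+1}},
\]
a doubly-exponential gap.

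The one step requiring real care is calibrating $K_0$ so that this ratio dominates $m_{j'}/m_j$ from the very first block of each $A(l_j, m_j)$, rather than only asymptotically. In the proposed partition $k' \in S_{j'}$ forces $j' = O(\log k')$, so $m_{j'}$ is at most polynomial in $k'$ (hence in $k$), whereas $e^{2^{k+1}}$ is doubly exponential in $k$; a single finite choice of $K_0$ depending on the enumeration therefore handles every pair $(j,j')$ simultaneously and yields both inequalities of \eqref{charcond}. Invoking Theorem \ref{thm:charahcalg} then produces the desired algebra of $\mathcal{A}_{\underline{\log}}$-hypercyclic vectors.
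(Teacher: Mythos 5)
Your overall strategy is the right one and matches the paper's: reduce to Theorem \ref{thm:charahcalg}, and build the sets $A(l,m)$ out of doubly-exponentially spaced blocks indexed by pairwise disjoint sets of integers of positive lower density. The cross-block estimates you sketch (disjointness, positive lower logarithmic density, and the doubly-exponential ratio $n'/n$ dominating $m'/m'$-type factors after a finite calibration) are all sound in outline.

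However, there is a genuine gap: you take each block to be a \emph{full} interval $[e^{2^k}, e^{2^{k+1}})\cap\mathbb{N}$, and condition \eqref{charcond} must also hold for pairs $n,n'$ lying in the \emph{same} set $A(l,m)$, indeed in the same block. Taking $l=l'$, $m=m'$ and $n'=n+1$ (two consecutive integers in one of your intervals), \eqref{charcond} demands $n'\geq n+l$ and $n'\geq n+2l$, i.e.\ $1\geq 2l$, which fails for every $l\geq 1$. So as stated your family $(A(l,m))$ does not satisfy the hypothesis of Theorem \ref{thm:charahcalg}, and the proof does not go through. The fix is to thin each block: inside the $k$-th interval keep only an arithmetic progression of step at least $2l$ (this is exactly what the paper does, replacing the interval $[2^{2^r},2^{2^r\cdot 3/2}]$ by the progression $2^{2^r}+2l, 2^{2^r}+4l,\ldots$). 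The thinning costs only a factor of roughly $\tfrac{1}{2l}$ in the lower logarithmic density, so positivity survives, and the within-block pairs then satisfy both inequalities of \eqref{charcond} since $m=m'$ there. With that modification your argument becomes a correct variant of the paper's proof; the remaining differences (your $e^{2^k}$ versus the paper's $2^{2^r}$ blocks, and your explicit enumeration of $\mathbb{N}^2$ versus the paper's dyadic-digit sets $I(l,m)$) are cosmetic.
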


\begin{proof} We may work equivalently with the density
\[
\underline{d}(A) = \liminf_{N\to\infty}\frac{\sum_{2\leq n\leq N, n\in A}(\log_2(n)-\log_2(n-1))}{\log_2(N)}.
\]
This follows from the fact 
\[
\frac{\log_2(n)-\log_2(n-1)}{1/n}
\]
converges to a strictly positive number as $n\to\infty$, so that the sets of positive density for $\underline{d}$ and $\text{log-}\underline{\text{dens}}$ coincide. 

For $l,m\geq 1$, let $I(l,m)$ be the set of all numbers $n\in\mathbb{N}$ whose dyadic representation $n=\sum_{j=0}^\infty a_j2^j=:(a_0,a_1,a_2,\ldots)$ has the form
\[
n = (0,\ldots, 0, 1,\ldots, 1, 0, \ast)
\]
with $l-1$ leading zeros, followed by $m$ ones, then one zero, followed by an arbitrary tail, see \cite[proof of Lemma 9.5]{GrPe11}. Since 
\[
\frac{\frac{2^{2^r}}{m}-l}{2^{2^{r-1}.3/2}}\to\infty
\]
as $r\to\infty$, there is some $r_{l,m}$ such that
\[
\frac{\frac{2^{2^r}}{m}-l}{2^{2^{r-1}.3/2}}\geq 1
\]
for all $r\geq r_{m,l}$.

We then define
\[
A(l,m) = \bigcup_{r\in I(l,m), r\geq r_{l,m}} B(l,r) 
\]
with
\[
B(l,r) = \{ 2^{2^r}+2l, 2^{2^r}+4l, \ldots, 2^{2^r}+2N_{l,r}l\},
\]
where $N_{l,r}$ is such that $2^{2^r}+2(N_{l,r}+1)l\leq 2^{2^r.3/2}<2^{2^r}+2(N_{l,r}+2)l$. Since the sets $I(l,m)$, $l,m\geq 1$, and the sets $[2^{2^r},2^{2^r.3/2}]$, $r\geq 1$, are pairwise disjoint, the same is true for the sets $A(l,m)$, $l,m\geq 1$.

We first show that, for any $l,m\geq 1$, $\underline{d}(A(l,m))>0$. For this it suffices to calculate the proportion of elements from $A(l,m)$ in the set of natural numbers up to $2^{2^r}+2kl-1$ for $k=1,\ldots, N_{l,r}$, where $r\in I(l,m)$ is large; note that $2^{2^r}+2kl$ is the $k$th element of $B(l,r)$. Since the difference between two consecutive elements of $I(l,m)$ is $\rho:=2^{m+l}$, the block to the left of $B(l,r)$ is $B(l,r-\rho)$. By regarding only the elements in this block, we find that the mentioned proportion is at least
\begin{align*}
\frac{\sum_{s=1}^{N_{l,r-\rho}}(\log_2(2^{2^{r-\rho}}+2sl)-\log_2(2^{2^{r-\rho}}+2sl-1))}{\log_2(2^{2^r}+2kl-1)}\\
\geq \frac{\sum_{s=1}^{N_{l,r-\rho}}(\log_2(2^{2^{r-\rho}}+2sl)-\log_2(2^{2^{r-\rho}}+2sl-1))}{\log_2(2^{2^r.3/2})}.
\end{align*}
Since $(\log_2(n)-\log_2(n-1))_n$ is decreasing, this quotient is at least
\begin{align*}
&\frac{\frac{1}{2l}\sum_{\nu=2l+1}^{2(N_{l,r-\rho}+1)l}(\log_2(2^{2^{r-\rho}}+\nu)-\log_2(2^{2^{r-\rho}}+\nu-1))}{\log_2(2^{2^r.3/2})}\\
  =& \frac{1}{2l}\frac{\log_2(2^{2^{r-\rho}}+2(N_{l,r-\rho}+1)l)-\log_2(2^{2^{r-\rho}}+2l)}{\log_2(2^{2^r.3/2})}\\
	\geq& \frac{1}{2l}\frac{\log_2(2^{2^{r-\rho}.3/2}-2l)-\log_2(2^{2^{r-\rho}}+2l)}{\log_2(2^{2^r.3/2})},
\end{align*}
where in the last inequality we have used the definition of $N_{l,r-\rho}$. The last term tends to $\frac{1}{6l2^{\rho}}$ as $r\to\infty$, which gives us that $\underline{d}(A(l,m))>0$.

Now let $n\in A(l,m)$ and $n'\in A(l',m')$ with $n'>n$, for some $l,m,l',m'\geq 1$. 

If $(l,m)=(l',m')$ then we have immediately that $n'-n\geq 2l$, which implies \eqref{charcond} in this case. 

If $(l,m)\neq (l',m')$ then we have by definition of $A(l,m)$ that also $n'\geq n+l$. Moreover, $n'\in B(l',r')$ for some $r'\in I(l',m')$ with $r'\geq r_{l',m'}$, hence $n'\geq 2^{2^{r'}}$. Also, $n\in B(l,r)$ for some $r\in I(l,m)$. Since $I(l,m)$ and $I(l',m')$ are disjoint and $n<n'$, we have that $r\leq r'-1$ and hence $n+l\leq 2^{2^r.3/2}\leq 2^{2^{r'-1}.3/2}$. This implies that 
\[
\frac{\frac{n'}{m'}-l'}{n+l} \geq  \frac{\frac{2^{2^{r'}}}{m'}-l'}{2^{2^{r'-1}.3/2}},
\]
which is at least 1 by definition of $r_{l',m'}$. Hence we have that
\[
n'\frac{m}{m'}\geq \frac{n'}{m'}\geq n+l+l',
\]
which confirms \eqref{charcond} also in that case.
\end{proof}

\begin{remark} The corollary can be further strengthened. For any $m\geq 1$, consider the densities
\[
\log^{m}\text{-}\underline{\text{dens}}(A) = \liminf_{N\to\infty}\frac{\sum_{1\leq n\leq N, n\in A}\frac{\log^{m-1}(n)}{n}}{\sum_{1\leq n\leq N}\frac{\log^{m-1}(n)}{n}},
\]
which include the lower logarithmic density when $m=1$. Then $\log^{m}\text{-}\underline{\text{dens}}(A)$ decreases as $m$ increases, see \cite[Lemma 2.8]{ErMo17}, so that the corresponding $\mathcal{A}_{\underline{\log}^m}$-hypercyclicities get stronger with increasing $m$. 

Now, the derivative of $x\to\log_2^{m}(x)$ is $x\to \frac{\log^{m-1}(x)}{x}$, up to a multiplicative constant. This implies that $\log^{m}\text{-}\underline{\text{dens}}$ has the same sets of positive density as
\[
\underline{d}_m(A)=\liminf_{N\to\infty}\frac{\sum_{2\leq n\leq N, n\in A}(\log_2^{m}(n)-\log_2^m(n-1))}{\log_2^m(N)},\ A\subset \mathbb{N}_0.
\]
One may now repeat the proof of the corollary to show that the same result also holds for the notion of $\mathcal{A}_{\underline{\log}^m}$-hypercyclicity, for any $m\geq 1$.

One might then be tempted to consider the densities
\[
\liminf_{N\to\infty}\frac{\sum_{1\leq n\leq N, n\in A}\frac{1}{n^\alpha}}{\sum_{1\leq n\leq N}\frac{1}{n^\alpha}}
\]
for $\alpha<1$ close to 1. But, interestingly, Ernst and Mouze \cite[Lemma 2.10]{ErMo17} have shown that the corresponding $\mathcal{A}$-hypercyclicities all coincide with frequent hypercyclicity, so that the corollary fails for these densities.
\end{remark}

\section{The MacLane operator and frequently hypercyclic algebras}\label{s-macl}

If the Rolewicz operator does not have frequently hypercyclic algebras, do other weighted shifts have them? In this section, we only offer some more negative results.

We first look at the MacLane operator $D:f\to f'$ of differentiation on the space $H(\mathbb{C})$ of entire functions, which is endowed with its natural topology of locally uniform convergence. It is well known that $D$ is frequently hypercyclic, see \cite{BaGr}. When we identify $H(\mathbb{C})$ with the space of sequences of Taylor coefficients at 0 of entire functions, $D$ becomes a weighted backward shift with weights $w_n=n$, $n\geq 1$:
\[
D(x(0),x(1),x(2),\ldots) = (x(1),2x(2),3x(3),\ldots),\quad x \in H(\mathbb{C}).
\]
We turn $H(\mathbb{C})$ into a Fr\'echet algebra by endowing it with the product of coordinatewise multiplication, also called the Hadamard product on $H(\mathbb{C})$; see \cite{falgro} for more details.

Now, the proof of Proposition \ref{prop:nopowers} carries over to this setting.

\begin{proposition}\label{prop:nopowersmacl}
Let $D$ denote the MacLane operator. If $x\in FHC(D)$, then there exists a natural number $M$ such that $x^{m}\notin HC(D)$ for any $m\geq M$. In particular, $D$ does not have any frequently hypercyclic algebra.
\end{proposition}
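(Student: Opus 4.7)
The plan is to adapt the proof of Proposition \ref{prop:nopowers} to the MacLane setting. Two things change: the Banach norm on $\ell_p$ or $c_0$ is replaced by a seminorm $p_R(f)=\sup_{|z|\le R}|f(z)|$ on $H(\mathbb{C})$ (I take $R=1$ for simplicity), and the geometric weights $\lambda^n$ are replaced by the factorial weights governing iteration of $D$, namely $[D^n x](j)=\tfrac{(n+j)!}{j!}x(n+j)$. The trivial ``each coordinate is controlled by the norm'' inequality used in the Rolewicz proof is replaced here by the Cauchy coefficient estimate $|[D^k x](j)|\le p_R(D^k x)/R^j$.

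To start, I would set $A=\{n:p_R(D^n x)<1\}$; frequent hypercyclicity makes $A$ have positive lower density, and after enumerating $A=(n_k)_{k\ge 1}$ Lemma \ref{lemmapld} furnishes an $M\in\mathbb{N}$ with $n_{k+1}\le M n_k$. Fix $m\ge M$ and suppose, for contradiction, that $x^m\in HC(D)$. Since the constant function $1$ belongs to $H(\mathbb{C})$, I can choose $n\ge n_1$ arbitrarily large with $p_R(D^n x^m-1)<\tfrac12$; reading the coefficient of $z^0$ gives $|n!\,x(n)^m-1|<\tfrac12$, hence the lower bound $|x(n)|>(2\,n!)^{-1/m}$. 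Now pick the unique $k$ with $n_k\le n<n_{k+1}$. Since $n_{k+1}\le M n_k\le m n_k$ we have $s:=n-n_k\in[0,\beta n]$ with $\beta:=(m-1)/m<1$; applying the Cauchy estimate to $D^{n_k}x$ at coefficient $s$ yields $\tfrac{n!}{s!}|x(n)|<1$, hence $|x(n)|<s!/n!$. The two bounds combine into
\[
n!^{(m-1)/m}<2^{1/m}\,s!.
\]

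The final step is to show this fails for $n$ large, which gives the contradiction. By Stirling,
\[
\log\bigl(n!^{(m-1)/m}/s!\bigr)=\beta n\log n-\beta n-s\log s+s+O(\log n),
\]
which, as a function of $s\in[0,\beta n]$, is concave (derivative $-\log s$) and hence attains its minimum at an endpoint; a short comparison shows the minimum occurs at $s=\beta n$ with value $-\beta n\log\beta+O(\log n)\to+\infty$. Thus the displayed inequality is violated for all sufficiently large $n$. The ``in particular'' clause is then immediate: were $B$ a frequently hypercyclic algebra for $D$, any non-zero $x\in B$ would belong to $FHC(D)$, and its powers $x^m$ would satisfy $x^m\neq 0$ (in the Hadamard product, $x^m=0$ forces $x=0$ coordinate-wise) and $x^m\in B\setminus\{0\}\subseteq FHC(D)\subseteq HC(D)$, contradicting the main statement for $m\ge M$. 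The main obstacle, as compared with the Rolewicz proof whose endgame was the one-line inequality $|\lambda|^{(mn_k-n)/m}\ge 1$, is exactly this Stirling asymptotic: the dominance of $n!^{(m-1)/m}$ over $s!$ must be checked uniformly across the full admissible range of $s=n-n_k$.
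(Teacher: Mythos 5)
Your proof is correct and follows essentially the same route as the paper's: extract from $x\in FHC(D)$ a set of return times with $n_{k+1}\le Mn_k$ via Lemma~\ref{lemmapld}, squeeze $|x(n)|$ between a lower bound coming from $D^nx^m\approx 1$ and an upper bound coming from the small-seminorm times $n_k$, and reach a factorial contradiction. The only differences are in implementation: the paper reads off Taylor coefficients through the continuous seminorm $\|f\|_1=\sum_{n}|x_n|$ rather than via sup-norm-plus-Cauchy estimates, and it closes the factorial comparison with the exact inequality $\big(n(n-1)\cdots(n-n_k+1)\big)^m\ge n!$, valid whenever $mn_k\ge n$, which dispenses with your Stirling/concavity asymptotics and with the need to take $n$ large.
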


\begin{proof}
We first note that
\[
\|x\|_1 = \sum_{n=0}^\infty |x_n|, \quad x\in H(\mathbb{C})
\]
defines a continuous seminorm on $H(\mathbb{C})$. Now, let $x\in FHC(D)$ and $0<\varepsilon<1$. Then the set 
$$
A=\{n\in\mathbb N:\Vert D^{n}x\Vert_1<\varepsilon\}
$$
has positive lower density. Writing the elements of $A$ as a strictly increasing sequence $(n_{k})_{k}$, we see from Lemma \ref{lemmapld} that there exists a natural number $M$ such that 
\begin{equation}\label{equation:ineqcond2}
n_{k+1}\leq Mn_{k}
\end{equation}
for all $k\in\mathbb{N}$. 

Now assume that $x^{m}\in HC(D)$ for some $m\geq M$. Then there exists a natural number $n\geq n_1$ such that
$$
\Vert D^{n}x^{m}-e_0\Vert_1 <1-\varepsilon^m,
$$
hence
$$
1-\varepsilon^m>\Vert D^{n}x^{m}-e_{0}\Vert_1 \geq \vert n!x^{m}(n)-1\vert \geq 1- n!\vert x^{m}(n)\vert,
$$
which implies that
$$
\vert x(n)\vert>\frac{\varepsilon}{n!^{\frac{1}{m}}}.
$$

There is some $k\in\mathbb{N}$ such that $n_k\leq n <n_{k+1}$. It follows with \eqref{equation:ineqcond2} that 
\[
mn_k\geq Mn_k\geq n_{k+1}\geq n
\]
and therefore
\[
\big(n(n-1)\cdots (n-n_k+1)\big)^m \geq n!,
\]
hence 
\[
\frac{n(n-1)\cdots (n-n_k+1)}{n!^{\frac{1}{m}}}\geq 1.
\]
But then
\begin{align*}
\varepsilon 
&>\Vert D^{n_{k}}x\Vert_1 \geq \big\vert \big[D^{n_{k}}x\big](n-n_{k})\big\vert=n(n-1)\cdots (n-n_k+1) |x(n)|\\
&>\frac{n(n-1)\cdots (n-n_k+1)}{n!^{\frac{1}{m}}}\varepsilon\geq \varepsilon,
\end{align*}
which is a contradiction. This shows that no $x^m$, $m\geq M$, can be hypercyclic for $D$.
\end{proof}

In another direction one may hope that there is some weighted backward shift $B_w$ on the spaces $\ell_p$ or $c_0$ that possesses a frequently hypercyclic algebra. Recall that $B_w$ is defined as
\[
B_w(x(1),x(2),x(3),\ldots) = (w(2)x(2),w(3)x(3),w(4)x(4),\ldots),
\]
where $w=(w(n))_{n\geq 2}$ is a bounded sequence of non-zero complex numbers. In the Rolewicz case, the weights $w_n=\lambda$, $n\geq 2$, are essentially the maximal weights allowed by the continuity of the operator. Thus, by choosing smaller weights, a frequently hypercyclic algebra might turn up. Recall that the weights cannot be too small because, in $\ell_p$, $B_w$ is frequently hypercyclic if and only if
\[
\sum_{n=2}^\infty \frac{1}{|w(2)w(3)\cdots w(n)|^p} <\infty,
\]
see \cite{BaRu15}. We see, however, that for the spaces $\ell_p$, an obstruction different from the one observed for the Rolewicz operators appears when the weights are too small. 

\begin{proposition}\label{prop:nopowersBw}
Let $B_w$ be a weighted backward shift on the Banach algebra $\ell_p$, $1 \leq p < \infty$, endowed with coordinatewise multiplication. Assume that $|w(n)|\geq 1$ for all $n\geq 2$ and that, for some integer $m\geq 2$,
\[
\sum_{n=2}^\infty \frac{1}{|w(2)w(3)\cdots w(n)|^{\frac{p}{m}}} = \infty.
\]
Then, for any $x\in \ell_p$, $x^m$ cannot be frequently hypercyclic. In particular, $B_w$  does not have any frequently hypercyclic algebra.
\end{proposition}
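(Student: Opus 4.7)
The plan is to argue by contradiction: suppose that $x\in \ell_p$ has $x^m \in FHC(B_w)$. Applying frequent hypercyclicity to the open ball $B(e_1, 1/2)$ produces a set
\[
A = \{n\in\mathbb N_0 : \|(B_w)^n x^m - e_1\|_p < 1/2\}
\]
of positive lower density. Since the first coordinate of $(B_w)^n x^m$ equals $w(2)w(3)\cdots w(n+1)\, x(n+1)^m$, every $n\in A$ satisfies $|w(2)\cdots w(n+1)|\, |x(n+1)|^m > 1/2$, and hence
\[
|x(n+1)|^p > \frac{2^{-p/m}}{|w(2)\cdots w(n+1)|^{p/m}}.
\]

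Because $x\in\ell_p$, summing the preceding inequality over $n\in A$ gives
\[
\infty > \sum_{n=1}^{\infty}|x(n)|^p \;\geq\; 2^{-p/m}\sum_{n\in A}\frac{1}{|w(2)\cdots w(n+1)|^{p/m}},
\]
so it will suffice to show that the right-hand sum is infinite. Write $a_n = 1/|w(2)\cdots w(n+1)|^{p/m}$. The hypothesis $|w(k)|\geq 1$ makes $(a_n)$ non-increasing, while $\sum_n a_n = \infty$ by assumption, so the task reduces to a purely arithmetic observation: if $(a_n)$ is non-negative and non-increasing with divergent sum, then $\sum_{n\in A} a_n = \infty$ for any $A$ of positive lower density.

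I would justify this auxiliary fact as follows. Enumerate $A = \{n_k\}_{k\geq 1}$; positive lower density $\delta > 0$ gives a linear bound $n_k \leq Ck$ for $C = 2/\delta$ and all large $k$ (this is the same observation that underlies Lemma~\ref{lemmapld}). By monotonicity of $(a_n)$, grouping the full sum in blocks of length $C$ yields $\sum_n a_n \leq C\sum_k a_{Ck}$, so $\sum_k a_{Ck} = \infty$, and therefore $\sum_{n\in A} a_n = \sum_k a_{n_k} \geq \sum_{k \text{ large}} a_{Ck} = \infty$, contradicting the finite estimate above. The final clause of the proposition is then immediate: any frequently hypercyclic algebra would contain some $x\in FHC(B_w)$ together with its power $x^m$, forcing $x^m$ to be frequently hypercyclic, which has just been ruled out.

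The main obstacle is precisely the passage from divergence of $\sum a_n$ to divergence of $\sum_{n\in A} a_n$. It crucially uses \emph{both} the monotonicity of $(a_n)$, furnished by the standing hypothesis $|w(n)|\geq 1$, \emph{and} the linear growth $n_k \leq Ck$ coming from positive lower density; neither property alone is enough, since without monotonicity there is no control over which terms of $\sum a_n$ are retained in $A$, while without the linear growth bound $n_k$ could grow geometrically and $\sum_k a_{n_k}$ could easily converge even when $\sum_n a_n$ diverges.
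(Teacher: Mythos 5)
Your proof is correct and follows essentially the same route as the paper: extract a positive-lower-density set of times at which the orbit of $x^m$ approximates $e_1$, read off a lower bound on $|x(n+1)|$ from the first coordinate, and use $x\in\ell_p$ together with the linear bound $n_k\leq Ck$ and the monotonicity of $n\mapsto 1/|w(2)\cdots w(n)|^{p/m}$ (from $|w(n)|\geq 1$) to contradict the divergence hypothesis. The only cosmetic difference is that you isolate the block-grouping step as an explicit lemma about non-increasing divergent series restricted to sets of positive lower density, whereas the paper runs the same comparison inline.
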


\begin{proof}
Let $x\in \ell_p$ be such that $x^m$ is frequently hypercyclic. Let $0<\varepsilon<1$. Then there is a strictly increasing sequence $(n_{k})_{k}$ of natural numbers and some $M>0$ such that $n_k\leq Mk$ for all $k\geq 1$ and
\[
\|B_w^{n_k} x^m -e_1\| < \varepsilon,\quad k\geq 1;
\]
see the proof of Lemma \ref{lemmapld}. Then we have that
\begin{align*}
\varepsilon>\|B_w^{n_k} x^m -e_1\| &\geq \vert w(2)w(3)\cdots w(n_k+1)x^{m}(n_k+1)-1\vert\\& \geq 1- \vert w(2)w(3)\cdots w(n_k+1)x^{m}(n_k+1)\vert,
\end{align*}
hence
\[
\vert x(n_k+1)\vert>\frac{(1-\varepsilon)^\frac{1}{m}}{ |w(2)w(3)\cdots w(n_k+1)|^{\frac{1}{m}}}.
\]
Since $x\in\ell_p$ we deduce that
\[
\sum_{k=1}^\infty \frac{1}{ |w(2)w(3)\cdots w(n_k+1)|^{\frac{p}{m}}}<\infty.
\]
The fact that $|w(n)|\geq 1$ for $n\geq 2$ now implies that
\[
\sum_{k=1}^\infty \frac{1}{ |w(2)w(3)\cdots w(Mk+1)|^{\frac{p}{m}}}<\infty
\]
and thus
\[
\sum_{n=M+1}^\infty \frac{1}{ |w(2)w(3)\cdots w(n)|^{\frac{p}{m}}}\leq M \sum_{k=1}^\infty \frac{1}{ |w(2)w(3)\cdots w(Mk+1)|^{\frac{p}{m}}}<\infty,
\]
which contradicts the hypothesis.
\end{proof}

The result applies, in particular, to the family of weights $w=(w(n))_n$ given by
\[
w(n) = \Big(\frac{n}{n-1}\Big)^\alpha,\quad n\geq 2,
\]
with $\alpha> 0$. Then $B_w$ is frequently hypercyclic in $\ell_p$ if and only if $\alpha>\frac{1}{p}$, but it never admits a frequently hypercyclic algebra.

\section{The set of frequently hypercyclic vectors is always algebrable -- in some sense}\label{s-alloperators}
In this section we show that, for \textit{any} frequently hypercyclic operator $T$ on any Banach space $X$, the set $FHC(T)$ of frequently hypercyclic vectors is algebrable -- one need only define a product on $X$ suitably. In fact, the set $FHC(T)$ is always densely lineable, i.e., it contains a dense linear subspace, except zero, see \cite[Proposition 4.2]{BaGr}. The idea is then to define a product under which this subspace becomes an infinitely generated algebra. Of course, the trivial product $(x,y)\to 0$ would already do the trick. But there are somewhat less trivial products that may also be considered.

Let $X$ be a Banach space, and let $\varphi$ be a non-zero continuous linear functional on $X$ with $\|\varphi\|\leq 1$. Define
\[
y*x =\varphi(y)x,\quad x,y\in X.
\]
It is easily verified that $(X,*)$ is a (non-commutative) Banach algebra. It has the property that, for any $x\in X$ and $j\in\mathbb{N}$,
\[
x^j = \varphi(x)^{j-1}x;
\]
in particular, the algebra generated by a set coincides with the linear subspace generated by the set. Hence the algebraic structure is not of great interest, but it allows us to obtain the following.

\begin{proposition}\label{thrm:posexample}
Let $T$ be a frequently hypercyclic operator on a Banach space $X$. Let $*$ be the product on $X$ defined by a non-zero continuous linear functional $\varphi$ on $X$. Then the set $FHC(T)$ contains a dense, infinitely generated algebra, except zero. In particular, $FHC(T)$ is algebrable.
\end{proposition}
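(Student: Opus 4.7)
The plan is to exploit the fact that under the product $y*x=\varphi(y)x$ the algebraic structure is essentially degenerate: every subalgebra of $(X,*)$ coincides with a linear subspace. The proposition should then follow at once from the known dense lineability of $FHC(T)$. There is no real obstacle here — the content of the proposition lies in the clever choice of product that makes algebrability automatic, not in the proof itself.

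First I would establish by an easy induction on $k$, using the associativity of $*$ (already observed just before the proposition), the identity
\[
x_1 * x_2 * \cdots * x_k = \varphi(x_1)\varphi(x_2)\cdots\varphi(x_{k-1})\,x_k
\]
for all $x_1,\ldots,x_k\in X$ and all $k\geq 1$. Consequently, any finite $*$-product of elements from a set $S\subset X$ is a scalar multiple of a single element of $S$, so the subalgebra of $(X,*)$ generated by $S$ coincides with the linear span of $S$. Two consequences are worth recording: any linear subspace of $X$ is automatically a subalgebra of $(X,*)$, and such a subalgebra is finitely generated as an algebra if and only if it is finite-dimensional as a vector space.

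Next I would invoke \cite[Proposition 4.2]{BaGr}, recalled at the start of this section: $FHC(T)$ is densely lineable, so there exists a dense linear subspace $Y\subset X$ with $Y\setminus\{0\}\subset FHC(T)$. Since $T$ is frequently hypercyclic, $X$ is infinite-dimensional, and any dense subspace of an infinite-dimensional Banach space is itself infinite-dimensional (a finite-dimensional subspace is closed and so cannot be a proper dense subspace). Combining this with the first step, $Y$ is a dense subalgebra of $(X,*)$ that is not finitely generated as an algebra, and by construction $Y\setminus\{0\}\subset FHC(T)$. This is precisely the statement of the proposition; in particular, $FHC(T)$ is algebrable.
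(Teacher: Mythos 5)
Your argument is correct and follows essentially the same route as the paper: invoke the dense lineability of $FHC(T)$ from \cite{BaGr}, observe that under $*$ every linear subspace is automatically a subalgebra, and note that a finitely generated subalgebra would be finite-dimensional, which a dense subspace of an infinite-dimensional space cannot be. You merely spell out in more detail the collapse of $*$-products to scalar multiples, which the paper records just before the proposition via $x^j=\varphi(x)^{j-1}x$.
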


\begin{proof} By the mentioned result in \cite{BaGr}, $FHC(T)$ contains a dense linear subspace $M$, except 0. It follows from the definition of $*$ that $M$ is even an algebra under that product. Now, if it were finitely generated, then $M$ would be finite-dimensional, which is not the case.
\end{proof}

\begin{remark}
One might object that the previous product is non-commutative. The following product $(x,y)\to \varphi(x)\varphi(y)x_0$, $x,y \in X$, where $x_0$ is a fixed element of $X$ with $\|x_0\|\leq 1$, gives to $X$ the structure of a commutative Banach algebra, and if $x_0\neq 0$ is an element of a dense linear subspace of $FHC(T)$, the product defined by the functional $\varphi$ satisfies Proposition \ref{thrm:posexample}.
\end{remark}

We conclude this section by showing that in many cases we can consider a suitable product on $X$ in such a way that the set of frequently hypercyclic vectors of an operator $T$ is even strongly algebrable. Note that for the products considered above, $FHC(T)$ is very far from being strongly algebrable; indeed, for these products, $x^2$ and $x^3$ are not linearly independent.

We recall that a frequently hypercyclic subspace for an operator $T$ is a closed infinite-dimensional subspace of frequently hypercyclic vectors for $T$, except zero. For operators that possess frequently hypercyclic subspaces we refer to \cite{BoGE12}, \cite{Men15}.

\begin{theorem}\label{thrm:niceexample}
Let $T$ be a frequently hypercyclic operator on a Banach space $X$.  If $T$ admits a frequently hypercyclic subspace, then there exists a product $\times$ on $X$ so that $(X,\times)$ is a commutative Banach algebra and $FHC(T)$ is strongly algebrable.
\end{theorem}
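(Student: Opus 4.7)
The plan is to realize a free commutative algebra on countably many generators as a subalgebra of the given frequently hypercyclic subspace $M$. Let $I^+$ denote the set of non-zero finitely supported multi-indices $\alpha=(\alpha_i)_{i\geq 1}$ with $\alpha_i\in\mathbb{N}_0$, and write $e_i$ for the multi-index with a single $1$ in position $i$. I would pick a linearly independent family $(f_\alpha)_{\alpha\in I^+}\subset M$ together with biorthogonal functionals $(\varphi_\alpha)\subset X^*$ (so $\varphi_\alpha(f_\beta)=\delta_{\alpha\beta}$), and define
\[
x\times y \;:=\; \sum_{\alpha,\beta\in I^+} \varphi_\alpha(x)\,\varphi_\beta(y)\,f_{\alpha+\beta}.
\]
Biorthogonality forces $f_\alpha\times f_\beta=f_{\alpha+\beta}$, so $x_i:=f_{e_i}$ satisfies $x_1^{\alpha_1}\times\cdots\times x_k^{\alpha_k}=f_\alpha$. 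Linear independence of $(f_\alpha)_{\alpha\in I^+}$ makes the $x_i$ algebraically independent, and the algebra they generate is exactly $\operatorname{span}\{f_\alpha:\alpha\in I^+\}$, which lies in $M$ and hence in $FHC(T)\cup\{0\}$. Commutativity is immediate by interchanging summation indices, and associativity follows from regrouping the double sum through the output index $\rho=\alpha+\beta$ and comparing the two sides coefficient-wise in $(f_\rho)$.

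For the concrete construction I apply Mazur's theorem to select a normalized basic sequence $(h_n)\subset M$ with basis constant $K$, and extend its coordinate functionals via Hahn--Banach to $(\psi_n)\subset X^*$ with $\|\psi_n\|\leq 2K$. I enumerate $\pi\colon I^+\to\mathbb{N}$ so that $\pi(\alpha),\pi(\beta)<\pi(\alpha+\beta)$ for all $\alpha,\beta\in I^+$ -- ordering primarily by total degree $|\alpha|$ works -- and set $f_\alpha := \lambda_{\pi(\alpha)} h_{\pi(\alpha)}$ and $\varphi_\alpha := \lambda_{\pi(\alpha)}^{-1}\psi_{\pi(\alpha)}$ for positive weights $(\lambda_n)$ still to be chosen. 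This preserves biorthogonality and gives $\|f_\alpha\|=\lambda_{\pi(\alpha)}$ and $\|\varphi_\alpha\|\leq 2K\lambda_{\pi(\alpha)}^{-1}$.

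The hard part will be selecting the $(\lambda_n)$ so that the defining series converges absolutely with a submultiplicative bound. A crude estimate gives
\[
\|x\times y\|\;\leq\;(2K)^2\|x\|\,\|y\|\sum_{n\geq 1}\lambda_n\sum_{\substack{\alpha+\beta=\pi^{-1}(n)\\ \alpha,\beta\in I^+}}\frac{1}{\lambda_{\pi(\alpha)}\lambda_{\pi(\beta)}},
\]
in which each inner sum is finite because only finitely many pairs in $I^+$ sum to a given multi-index. Thanks to the enumeration, when $\lambda_n$ is chosen all the relevant $\lambda_{\pi(\alpha)},\lambda_{\pi(\beta)}$ are already fixed, so I can inductively pick $\lambda_n>0$ small enough that the $n$-th term is at most $2^{-n}$. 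This yields a constant $C$ with $\|x\times y\|\leq C\|x\|\,\|y\|$; replacing $\|\cdot\|$ by the equivalent norm $C\|\cdot\|$ when necessary produces the desired commutative Banach algebra structure, without altering the topology, the operator $T$, or the set $FHC(T)$. The family $(x_i)_{i\geq 1}$ then witnesses strong algebrability of $FHC(T)$.
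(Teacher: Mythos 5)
Your construction is correct, and it takes a genuinely different route from the paper's. The paper also starts from Mazur's theorem, but it splits the resulting normalized basis of the frequently hypercyclic subspace into two subsequences $(x_l)$ and $(a_r)$, builds functionals $\phi_r(x)=a_r^*(x)+\sum_l\lambda_{l,r}x_l^*(x)$ from a matrix whose columns run (infinitely often) through a countable dense set of finite sequences in $\ell_\infty$, and defines $y\times x=\sum_r 2^{-r}\Vert\phi_r\Vert^{-2}\phi_r(y)\phi_r(x)a_r$; submultiplicativity is then automatic, but algebraic independence of $(x_l)$ has to be extracted afterwards by a polynomial argument (the lowest homogeneous part of a putative relation cannot vanish at all columns of the matrix). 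You instead transport the free non-unital commutative algebra directly: indexing a rescaled basic sequence by the additive monoid $I^+$ of non-zero multi-indices and forcing $f_\alpha\times f_\beta=f_{\alpha+\beta}$ makes the monomials $x^\alpha=f_\alpha$ linearly independent by construction, so algebraic independence is immediate; the price is paid up front in the choice of the enumeration $\pi$ and the weights $\lambda_n$, plus a harmless renorming by the constant $C$. Two small points in your version need repair or amplification. First, ``ordering primarily by total degree'' does not literally yield a bijection $\pi\colon I^+\to\mathbb{N}$, since each degree class is infinite; but an enumeration with $\pi(\alpha),\pi(\beta)<\pi(\alpha+\beta)$ certainly exists --- order primarily by $|\alpha|+\max\operatorname{supp}(\alpha)$, whose level sets are finite and which strictly increases when a non-zero multi-index is added. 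Second, associativity requires the triple series $\sum_{\alpha,\beta,\delta}\varphi_\alpha(x)\varphi_\beta(y)\varphi_\delta(z)f_{\alpha+\beta+\delta}$ to converge absolutely so that the two groupings coincide; this does follow from your estimates, since grouping by $\gamma=\alpha+\beta$ and using $\sum_{\alpha+\beta=\gamma}\lambda_{\pi(\gamma)}/(\lambda_{\pi(\alpha)}\lambda_{\pi(\beta)})\leq 2^{-\pi(\gamma)}\leq 1$ reduces the triple sum to the double one, but it should be said. With these clarifications your argument is a valid and arguably more transparent alternative: the free structure is visible in the product itself rather than being verified a posteriori, whereas the paper's product needs no renorming and no careful enumeration.
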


\begin{proof}
Since $T$ admits a frequently hypercyclic subspace, there exists by Mazur's theorem a closed infinite-dimensional subspace $V$ of frequently hypercyclic vectors of $T$, except 0, that has a normalized Schauder basis. We split this Schauder basis into two subsequences $(x_{l})_{l\geq 1}$ and $(a_{r})_{r\geq 1}$. 

Let $E$ be a countable dense subset of the set of finite sequences of norm at most 1 in $\ell_\infty$. Let $\Lambda=(\lambda_{k,r})_{k,r\geq 1}$ be a matrix so that each column belongs to $E$, and each element of $E$ appears infinitely often as a column.  

For each $r\geq 1$ we define the mapping $\phi_{r}$ on $V$ as 
\begin{equation*}
\phi_{r}(x)=a_{r}^*(x)+\sum_{l=1}^\infty \lambda_{l,r} x_{l}^*(x),
\end{equation*}
where $a_r^*$ and $x_l^*$ are the coefficient functionals corresponding to the Schauder basis of $V$; note that the series is a finite sum.  Since the coefficient functionals  are linear and continuous on $V$, we have that $\phi_{r}$ is linear and continuous on $V$. By the Hahn-Banach theorem, we can extend the mappings $\phi_{r}$ to be defined on $X$. Since $\phi_r(a_r)=1=\|a_r\|$, we have that $\|\phi_r\|\geq 1$ for all $r\geq 1$.

Now, for any two points $x,y\in X$ we define 
\begin{equation*}
y\times x=\sum_{r=1}^{\infty}\frac{2^{-r}}{\Vert \phi_r\Vert^2}\phi_{r}(y)\phi_{r}(x)a_{r}.
\end{equation*}
We claim that $(X,\times)$ is a commutative Banach algebra. 

The fact that $\times$ is commutative follows directly from the definition of the multiplicative structure.

We check now that the product is associative. Given $x, y, z\in X$ we have that
\begin{equation}\label{eqtimes}
\begin{split}
z\times(y\times x)&=z\times \Big(\sum_{r=1}^{\infty}\frac{2^{-r}}{\Vert \phi_r\Vert^2}\phi_{r}(y)\phi_{r}(x)a_{r} \Big)\\
&=\sum_{s=1}^{\infty}\sum_{r=1}^{\infty}\frac{2^{-s-r}}{\Vert \phi_s\Vert^2 \Vert \phi_r\Vert^2}\phi_{s}(z)\phi_{r}(y)\phi_{r}(x)\phi_{s}(a_{r})a_{s}\\
&=\sum_{r=1}^{\infty}\frac{2^{-2r}}{\Vert \phi_r\Vert^4}\phi_{r}(z)\phi_{r}(y)\phi_{r}(x)a_{r}\quad\text{(since $\phi_{s}(a_{r})=\delta_{r,s}$)},
\end{split}
\end{equation}
and similarly
\begin{align*}
(z\times y)\times x &= \Big(\sum_{r=1}^{\infty}\frac{2^{-r}}{\Vert \phi_r\Vert^2}\phi_{r}(z)\phi_{r}(y)a_{r}\Big) \times x\\
&= \sum_{s=1}^{\infty}\sum_{r=1}^{\infty}\frac{2^{-s-r}}{\Vert \phi_s\Vert^2\Vert \phi_r\Vert^2}\phi_{r}(z)\phi_{r}(y)\phi_{s}(a_{r})\phi_{s}(x)a_{s}\\
&= \sum_{r=1}^{\infty}\frac{2^{-2r}}{\Vert \phi_r\Vert^4}\phi_{r}(z)\phi_{r}(y)\phi_{r}(x)a_{r}. 
\end{align*}
Therefore $z\times(y\times x)=(z\times y)\times x$ and $\times$ is associative on $X$. Also, for all $x,y\in X$,
\begin{align*}
\Vert y\times x\Vert& \leq \sum_{r=1}^{\infty}\frac{2^{-r}}{\Vert \phi_r\Vert^2}\vert\phi_{r}(y)\vert \vert\phi_{r}(x)\vert\Vert a_{r}\Vert\\
&\leq\Vert y\Vert \Vert x \Vert \sum_{r=1}^{\infty}2^{-r} =\Vert y\Vert \Vert x\Vert.
\end{align*}
Hence, $(X,\times)$ is a commutative  Banach algebra.

We note for later use that, by \eqref{eqtimes} generalized to higher products and applied to points in the sequence $(x_{l})_{l\geq 1}$, we have for any $j_1,j_2,\ldots, j_s\in\mathbb N$, $s\geq 2$,
\begin{align*}
x_{j_1}\times x_{j_2}\times \cdots\times x_{j_s}&=\sum_{r=1}^\infty \frac{2^{-r(s-1)}}{\Vert \phi_r\Vert^{2s-2}}\phi_r(x_{j_1})\cdots \phi_r(x_{j_s})a_r\\
&=\sum_{r=1}^\infty \frac{2^{-r(s-1)}}{\Vert \phi_{r}\Vert^{2s-2}}\lambda_{j_1,r}\cdots \lambda_{j_s,r}a_{r},
\end{align*}
and hence for any multi-index $\beta=(\beta_1,\beta_2,\ldots, \beta_s)\in\mathbb N_0^s$, $s\geq 1$, with $|\beta|\geq 2$,
\begin{equation}\label{basicproducts}
x_{1}^{\beta_1}\times x_{2}^{\beta_2}\times \cdots\times x_{s}^{\beta_s}=\sum_{r=1}^\infty \frac{2^{-r(|\beta|-1)}}{\Vert \phi_r\Vert^{2|\beta|-2}}\lambda_{1,r}^{\beta_1}\cdots \lambda_{s,r}^{\beta_s}a_{r}.
\end{equation}

Let $\mathcal B$ be the algebra generated by the sequence $(x_{l})_{l\geq 1}$. It follows from the definition of the product that any element in $\mathcal B$ is the sum of a linear combination of certain $x_l$ and a convergent series in multiples of the $a_l$, so that 
$\mathcal B\subset V$. This implies that any non-zero element of $\mathcal B$ lies in $FHC(T)$. 

To finish we only need to show that the sequence $(x_l)_{l\geq 1}$ is algebraically independent. Assume that this is not the case. Then there exist a finite set of multi-indices $I\subset \mathbb N_0^s$ and complex numbers $c_\beta$, $\beta\in I$, not all of them zero, such that
\begin{equation}\label{algebraiccombination}
0=\sum_{\beta\in I,\beta\neq 0}c_{\beta}x_{1}^{\beta_{1}}\times\cdots\times x_{s}^{\beta_{s}} =\sum_{l=1}^{s}\eta_{l}x_{l}+\sum_{r=1}^{\infty}\gamma_{r}a_{r}
\end{equation}
for some complex numbers $\eta_{l}$ and $\gamma_r$, $l,r\geq 1$, where the final series converges in $X$.

Note that, since the union of $(x_{l})_{l}$ and $(a_{r})_{r}$ is a Schauder basis for $V$, we have that $\eta_{l}=\gamma_{r}=0$ for all $l, r\geq 1$. We will show that this leads to a contradiction. In fact, it is clear from the construction of the product that each coefficient $c_{\beta}$ with $\vert \beta\vert=1$ coincides with one of the $\eta_{l}$, $l\geq 1$, hence all of these $c_{\beta}$ must be zero. We show now that not all $\gamma_{r}$ can be equal to zero.

In order to see this, consider the complex polynomial $P$ defined in $\mathbb C^s$ by
\begin{equation*}
P(z_1,\ldots,z_s)=\sum_{\beta\in I,\beta\neq 0}c_{\beta}z_1^{\beta_{1}}\cdots z_s^{\beta_{s}}.
\end{equation*}
Since some $c_\beta\neq 0$, $P$ is non-zero. Assume that $P$ has degree $M$. We denote by $P_{j}, \ldots, P_{M}$ the homogeneous parts of $P$, so $P(z_1,\ldots,z_s)=\sum_{m=j}^M P_{m}(z_1,\ldots,z_s)$, and $P_{j}$ is a non-zero $j$-homogeneous polynomial. Note that $j>1$ by our previous argument.

It follows from \eqref{basicproducts} that, for $j\leq m\leq M$,
\begin{align*}
\sum_{\beta\in I,|\beta|=m}c_{\beta}x_{1}^{\beta_{1}}\times\cdots\times x_{s}^{\beta_{s}} &= 
\sum_{r=1}^\infty \frac{2^{-r(m-1)}}{\Vert \phi_{r}\Vert^{2m-2}}\Big(\sum_{\beta\in I,|\beta|=m}c_{\beta}
\lambda_{1,r}^{\beta_1}\cdots \lambda_{s,r}^{\beta_s}\Big)a_{r}\\
&= \sum_{r=1}^\infty \frac{2^{-r(m-1)}}{\Vert \phi_{r}\Vert^{2m-2}} P_m(\lambda_{1,r},\ldots, \lambda_{s,r})a_{r},
\end{align*}
and hence
\[
\sum_{\beta\in I,\beta\neq 0}c_{\beta}x_{1}^{\beta_{1}}\times\cdots\times x_{s}^{\beta_{s}} = 
\sum_{r=1}^\infty \Big(\sum_{m=j}^M\frac{2^{-r(m-1)}}{\Vert \phi_{r}\Vert^{2m-2}}P_m(\lambda_{1,r},\ldots, \lambda_{s,r})\Big)a_{r}.
\]
Thus \eqref{algebraiccombination} implies that
\begin{equation}\label{eqgamma}
\gamma_r=\sum_{m=j}^M\frac{2^{-r(m-1)}}{\Vert \phi_{r}\Vert^{2m-2}}P_m(\lambda_{1,r},\ldots, \lambda_{s,r}), \ r\geq 1.
\end{equation} 

Now, since the polynomial $P_{j}$ is not identically zero, there exists, by the definition of $\Lambda$, some $r\geq 1$ with
$$
\vert P_{j}(\lambda_{1,r},\ldots,\lambda_{s,r})\vert>0.
$$
Consider the columns $(\lambda_{k,r_{n}})_k$, $n\geq 1$, of $\Lambda$ that coincide with $(\lambda_{k,r})_{k}$.
Then we obtain that
\begin{align*}
\Big\vert \sum_{m=j}^M & \frac{2^{-r_n(m-1)}}{\Vert \phi_{r_n}\Vert^{2m-2}}P_{m}(\lambda_{1,r_{n}},\ldots,\lambda_{s,r_{n}})\Big\vert = \Big\vert \sum_{m=j}^M  \frac{2^{-{r_n}(m-1)}}{\Vert \phi_{r_n}\Vert^{2m-2}}P_{m}(\lambda_{1,r},\ldots,\lambda_{s,r})\Big\vert\\
&\geq \frac{2^{-{r_n}(j-1)}}{\Vert \phi_{r_n}\Vert^{2j-2}}\Big(\vert P_{j}(\lambda_{1,r},\ldots,\lambda_{s,r})\vert -\!\sum_{m=j+1}^M \frac{2^{-{r_n}(m-j)}}{\Vert \phi_{r_n}\Vert^{2(m-j)}}\vert  P_{m}(\lambda_{1,r},\ldots,\lambda_{s,r})\vert\Big)>0
\end{align*}
for all numbers $n\geq 1$ that are big enough, where we have used that $\|\phi_r\|\geq 1$ for all $r\geq 1$. 

Therefore, by \eqref{eqgamma}, we have that $\gamma_{r_n}$ is non-zero for some $n\geq 1$, which is the desired contradiction. 
\end{proof}


\end{document}